\numberwithin{equation}{section}
\theoremstyle{definition}
\newtheorem{Definition}{Definition}[section]
\newtheorem{Conj}[Definition]{Conjecture}
\newtheorem{Cor}[Definition]{Corollary}
\newtheorem{Lem}[Definition]{Lemma}
\newtheorem{Note}[Definition]{Note}
\newtheorem{Prop}[Definition]{Proposition}
\newtheorem{Thm}[Definition]{Theorem}
\newcommand{\arccot}{\operatorname*{arccot}}
\newcommand{\Arg}{\operatorname*{Arg}}
\newcommand{\colonequal}{\mathrel{\mathop:}=}
\newcommand{\lcm}{\operatorname*{lcm}}
\newcommand{\nequiv}{\mathrel{\not\equiv}}
\begin{document}

\title{The iterated integrals of $\ln(1 + x^2)$}

\author[T. Amdeberhan]{Tewodros Amdeberhan}
\address{Department of Mathematics,
Tulane University, New Orleans, LA 70118}
\email{tamdeber@math.tulane.edu}

\author[C. Koutschan]{Christoph Koutschan}
\address{Research Institute for Symbolic Computation,
Johannes Kepler University, 4040 Linz, Austria}
\email{ckoutsch@risc-uni-linz.ac.at}

\author[V. Moll]{Victor H. Moll}
\address{Department of Mathematics,
Tulane University, New Orleans, LA 70118}
\email{vhm@math.tulane.edu}

\author[E. Rowland]{Eric S. Rowland}
\address{Department of Mathematics,
Tulane University, New Orleans, LA 70118}
\email{erowland@tulane.edu}

% General info
\subjclass[2010]{Primary 26A09, Secondary 11A25}
\date{April 4, 2011}
\keywords{Iterated integrals, harmonic numbers, recurrences, valuations, hypergeometric functions}

\begin{abstract}
For a polynomial $P$, we consider the sequence of iterated integrals of
$\ln P(x)$. This sequence
is expressed in terms of the zeros of $P(x)$. In the special case of 
$\ln(1+x^{2})$, arithmetic properties of certain 
coefficients arising are described.  Similar 
observations are made for $\ln (1 + x^3)$.
\end{abstract}

\maketitle

\section{Introduction} \label{sec-intro}
\setcounter{equation}{0}

The evaluation of integrals, a subject that had an important role in 
the $19^{th}$ century, has been given a new life with the development of 
symbolic mathematics software such as \emph{Mathematica} or \emph{Maple}. 
The question of indefinite integrals was provided with an algorithmic approach 
beginning with work of J. Liouville~\cite{liouville-1834c} discussed in 
detail in Chapter IX of Lutzen~\cite{lutzen}. A more modern treatment can be found in Ritt~\cite{ritt-1948}, R. H. Risch~\cite{risch1,risch2}, and M. Bronstein~\cite{bronstein2}. 

The absence of a complete algorithmic solution to the problem of evaluation
of definite integrals justifies the validity of tables of integrals such as~\cite{apelblat, gr, prudnikov1}.
These collections have not been superseded, yet, by the software mentioned above. 

The point of view illustrated in this paper is that the quest for evaluation 
of definite integrals may take the reader to unexpected parts of mathematics. 
This has been described by one of the authors 
in~\cite{moll-notices,moll-seized}. The goal here is to consider the sequence 
of {\em iterated integrals} of a function $f_{0}(x)$, defined by
\begin{equation}
	f_{n}(x) = \int_{0}^{x} f_{n-1}(t) \, dt \quad \text{if $n \geq 1$}.
\end{equation}
This formula carries the implicit normalization $f_{n}(0) = 0$ for $n \geq 1$.

A classical formula for the iterated integrals is given by 
\begin{equation}
\label{class-1}
f_{n}(x) = \frac{d^{-n}}{dx^{-n}} f(x) = 
\frac{1}{(n-1)!} \int_{0}^{x} f_{0}(t) \, 
(x-t)^{n-1} \, dt. 
\end{equation}
Expanding the kernel $(x-t)^{n-1}$ gives $f_{n}$ in terms of the moments
\begin{equation}
	M_{j}(x) = \int_{0}^{x} t^{j} f_{0}(t) \, dt
\end{equation}
as
\begin{equation}
	f_{n}(x) = \sum_{j=0}^{n-1} (-1)^{j} \frac{x^{n-1-j}}{j! \, (n-1-j)!} M_{j}(x). \label{expan-1}
\end{equation}

The work presented here deals with the sequence starting at 
$f_{0}(x) = \ln(1 + x^{N})$. The main observation is that the 
closed-form expression of the iterated integrals contains a pure polynomial 
term and a linear combination of transcendental functions with polynomial 
coefficients. Some arithmetical properties of the 
pure polynomial term are described.

\section{The iterated integral of $\ln(1+x)$} \label{sec-case1}
\setcounter{equation}{0}

The iterated integral of $f_{0}(x) = \ln(1+x)$ was described in~\cite{iter-loga1}. This sequence has the form
\begin{equation}
	f_{n}(x) = A_{n,1}(x) + B_{n,1}(x) \ln(1+x) 
\label{loga-0}
\end{equation}
where 
\begin{align}
	A_{n,1}(x) &= - \frac{1}{n!} \sum_{k=1}^{n} \binom{n}{k} \left( H_{n} - H_{n-k} \right) x^{k} = -\frac{1}{n!} \sum_{k=1}^n \frac{x^k (x+1)^{n-k}}{k}, \label{An-def} \\
	B_{n,1}(x) &= \frac{1}{n!} (1+x)^{n}, \nonumber
\end{align}
where $H_{n} = 1 + \frac{1}{2} + \dots + \frac{1}{n}$ is the $n$th harmonic number. 

The expression for $B_{n,1}(x)$ is easily guessed from the 
symbolic computation of the 
first few values. The corresponding closed form 
for $A_{n,1}(x)$ was more difficult to find experimentally. Its study 
began with the analysis of its 
denominators, denoted here by $\alpha_{n,1}$. The fact that the ratio
\begin{equation}
	\beta_{n,1} \colonequal \frac{\alpha_{n,1}}{n \, \alpha_{n-1,1}} \label{beta-1}
\end{equation}
satisfies 
\begin{equation}
\beta_{n,1} = \begin{cases} 
 p & \text{if $n$ is a power of the prime $p$} \\
 1 & \text{otherwise}
 \end{cases}
\end{equation}
was the critical observation in obtaining the closed form $A_{n,1}(x)$ 
given in~\eqref{An-def}. We recognize $\beta_{n,1}$ as $e^{\Lambda(n)}$, where 
\begin{equation}
\label{beta-lambda}
\Lambda(n) = \begin{cases} 
 \ln p & \text{if $n$ is a power of the prime $p$} \\
 0 & \text{otherwise}
 \end{cases}
\end{equation}
is the von Mangoldt function.  This yields 
\[
	\alpha_{n,1} = n! \prod_{j=2}^{n} \beta_{j,1} = n! \prod_{j=2}^{n} e^{\Lambda(j)},
\]
and the relation 
\begin{equation}
e^{\Lambda(n)} = \frac{\lcm(1, \dots, n)}{\lcm(1, \dots, n-1)}
\end{equation}
shows that 
\begin{equation}
\alpha_{n,1} = n! \, \lcm(1, \dots, n).
\label{formula-alpha1}
\end{equation}

\begin{Note}
The harmonic number $H_{n}$ appearing in (\ref{An-def})  has 
challenging arithmetical properties. Written in reduced form as 
\begin{equation}
	H_{n} = \frac{N_{n}}{D_{n}},
\end{equation}
the denominator $D_{n}$ divides the least common multiple $L_{n} \colonequal \lcm(1, 2, \dots, n)$.  The complexity of the ratio $L_{n}/D_{n}$ can be seen in Figure~\ref{ratio1}.  It has been conjectured~\cite[page 304]{graham1} that $D_{n} = L_{n}$ for infinitely many values of $n$.
\end{Note}

\begin{figure}
\begin{center}
	\includegraphics{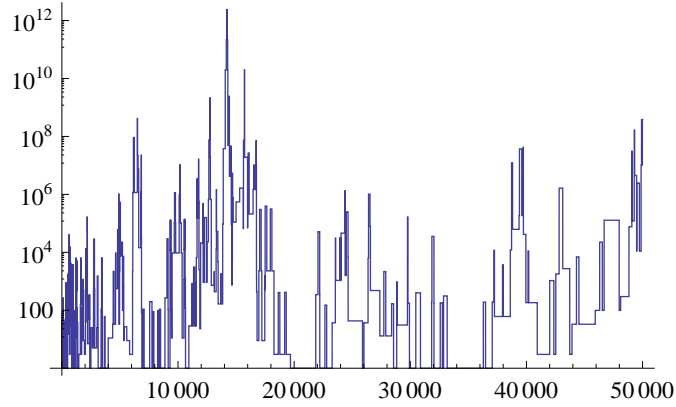}
	\caption{Logarithmic plot of the ratio $L_{n}/D_{n}$.}\label{ratio1}
\end{center}
\end{figure}

The expressions for $A_{n,1}(x)$ and $B_{n,1}(x)$ can also be derived from~\eqref{expan-1}.  Letting $f_{0}(x) = \ln(1+x)$ yields
\begin{equation}
f_{n}(x) = \sum_{j=0}^{n-1} (-1)^{j} \frac{x^{n-1-j}}{j! (n-1-j)!} 
\int_{0}^{x} t^{j} \ln(1+t) \, dt. 
\label{form-1a}
\end{equation}
Integration by parts gives 
\begin{equation}
\int_{0}^{x} t^{j} \ln(1+t) \, dt = 
\frac{x^{j+1} \, \ln(1+x)}{j+1} - \frac{1}{j+1} \int_{0}^{x} \frac{t^{j+1} 
\, dt}{1+t}.
\end{equation}
Replacing in~\eqref{form-1a} shows that the contribution of the first term 
reduces simply to $x^{n} \ln(1+x)$. Therefore
\begin{equation}
	f_{n}(x) = \frac{1}{n!} x^{n} \ln(1+x) + \frac{1}{n!} \sum_{j=1}^{n} (-1)^{j} \binom{n}{j} x^{n-j} \int_{0}^{x} \frac{t^{j} \, dt}{1+t}. 
\label{form-1b}
\end{equation}

It remains to provide a closed form for the integrals 
\begin{equation}
	I_{j} \colonequal \int_{0}^{x} \frac{t^{j}}{1+t} \, dt.
\end{equation}
These can be produced by elementary methods by writing 
\begin{equation}
	\frac{t^{j}}{1+t} = \frac{t^{j} - (-1)^{j}}{1+t} + \frac{(-1)^{j}}{1+t}.
\end{equation}
Replacing in~\eqref{form-1b} gives
\begin{eqnarray*}
	f_{n}(x)& = & \frac{1}{n!} x^{n} \ln(1+x) \\
		& + & \frac{1}{n!} \sum_{j=1}^{n} (-1)^{j} \binom{n}{j} x^{n-j} \int_{0}^{x} \frac{t^{j} - (-1)^{j}}{t+1} \, dt \\
		& + & \frac{1}{n!} \sum_{j=1}^{n} \binom{n}{j} x^{n-j} \int_{0}^{x} \frac{dt}{1+t}.
\end{eqnarray*}
The first and last line add up to $(x+1)^{n} \ln(1+x)/n!$, which yields the 
closed-form expression for $B_{n,1}(x)$. Expanding the quotient 
in the second line produces
\begin{equation}
	\frac{1}{n!} \sum_{j=1}^{n} (-1)^{j} \binom{n}{j} x^{n-j} \sum_{r=0}^{j-1} \frac{(-1)^{r}}{j-r} x^{j-r} = 
	\frac{1}{n!} \sum_{j=0}^{n-1} \binom{n}{j} x^{j} \sum_{r=1}^{n-j} \frac{(-1)^{r}}{r} x^{r}.
\end{equation} 
The double sum can be written as 
\begin{equation}
	\frac{1}{n!} \sum_{j=0}^{n} \sum_{r=1}^{n-j} \binom{n}{j} \frac{(-1)^r}{r} x^{j+r} =
	\frac{1}{n!} \sum_{a=1}^{n} \left[ \sum_{r=1}^{a} \binom{n}{a-r} \frac{(-1)^{r}}{r} \right] x^{a}.
\end{equation}
The expression for $A_{n,1}(x)$ now follows from the identity
\begin{equation}
	\sum_{r=1}^{a} \binom{n}{a-r} \frac{(-1)^{r}}{r} = - \binom{n}{a} \left[ H_{n} - H_{n-a} \right].
\end{equation}
An equivalent form, with $m = n-a$, is given by
\begin{equation}
	U(a) \colonequal \sum_{r=1}^{a} \frac{(-1)^{r-1} \binom{a}{r}}{r \, \binom{m+r}{r}} = H_{m+a} - H_{m}.
\end{equation}
To establish this identity, we employ the WZ method~\cite{aequalsb}. Define the pair of functions
\begin{equation}
	F(r,a) = \frac{(-1)^{r-1} \binom{a}{r}}{r \, \binom{m+r}{r}} \quad \text{and} \quad G(r,a) = \frac{(-1)^{r} \binom{a}{r-1}}{(m+a+1) \, \binom{m+r-1}{r-1}}.
\end{equation}
It can be easily checked that 
\begin{equation}
	F(r,a+1) - F(r,a) = G(r+1,a) - G(r,a).
\end{equation}
Summing both sides of this equation over $r$, from $1$ to $a+1$, leads to 
\begin{equation}
	U(a+1) - U(a) = \frac{1}{m+a+1}.
\end{equation}
Now sum this identity over $a$, from $1$ to $k-1$, to obtain
\begin{equation}
	U(k) - U(1) = \sum_{a=1}^{k-1} \frac{1}{m+a+1} = \sum_{a=m+2}^{m+k} \frac{1}{r} = H_{m+k} - H_{m+1}.
\end{equation}
Combining this with the initial condition $U(1) = \tfrac{1}{m+1}$ gives the result.

\section{The method of roots}\label{sec-roots}
\setcounter{equation}{0}

The iterated integrals of the 
function $f_{0}(x) = \ln P(x)$ for a general polynomial
\begin{equation}
	P(x) = \prod_{j=1}^{m} (x + z_{j})
\end{equation}
are now expressed in terms of the roots $z_j$ using an explicit expression for 
the iterated integrals of $f_{0}(x) = \ln(x+a)$.

\begin{Thm}
\label{thm-loga}
The iterated integral of $f_{0}(x) = \ln(x+a)$ is given by 
\begin{equation}
f_{n}(x) =
- \frac{1}{n!} \sum_{k=1}^{n} \frac{x^{k} (x+a)^{n-k}}{k}
- \frac{(x+a)^{n} - x^{n}}{n!} \, \ln a
+ \frac{(x+a)^{n}}{n!} \ln(x+a).
\label{form-gen-a}
\end{equation}
\end{Thm}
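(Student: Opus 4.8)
The plan is to avoid a direct computation and instead reduce everything to the closed form already obtained for $f_{0}(x)=\ln(1+x)$ in Section~\ref{sec-case1}, using linearity of the iterated-integration operator together with a scaling relation. First I would write
\[
\ln(x+a) = \ln a + \ln\!\Bigl(1+\frac{x}{a}\Bigr),
\]
so that $f_{0}$ decomposes as the constant $\ln a$ plus a rescaled copy of $g_{0}(u)=\ln(1+u)$. Since iterated integration is linear and the $n$th iterated integral of the constant function $1$ is $x^{n}/n!$, the contribution of the $\ln a$ summand to $f_{n}(x)$ is simply $(x^{n}/n!)\ln a$.

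The key step is a scaling lemma: if $h_{0}(x)=g_{0}(x/a)$ and $g_{n}$ denotes the $n$th iterated integral of $g_{0}$, then
\[
h_{n}(x) = a^{n}\, g_{n}\!\bigl(x/a\bigr).
\]
This follows by a one-line induction, applying the substitution $s=t/a$ in $h_{n}(x)=\int_{0}^{x} h_{n-1}(t)\,dt$; the factor $a$ arising from $dt=a\,ds$ combines with the inductive factor $a^{n-1}$ to give $a^{n}$, and the normalization $h_{n}(0)=0$ is preserved at each stage.

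With these two ingredients in hand, I would substitute the closed form
\[
g_{n}(u) = -\frac{1}{n!}\sum_{k=1}^{n}\frac{u^{k}(u+1)^{n-k}}{k} + \frac{(1+u)^{n}}{n!}\ln(1+u),
\]
coming from~\eqref{An-def}, into $h_{n}(x)=a^{n}g_{n}(x/a)$ and simplify. Each monomial $a^{n}\,(x/a)^{k}\bigl((x+a)/a\bigr)^{n-k}$ collapses to $x^{k}(x+a)^{n-k}$, the prefactor $a^{n}(1+x/a)^{n}$ becomes $(x+a)^{n}$, and $\ln(1+x/a)=\ln(x+a)-\ln a$. Adding the $(x^{n}/n!)\ln a$ term from the constant part and collecting the two logarithmic-in-$a$ contributions then produces the coefficient $-\bigl((x+a)^{n}-x^{n}\bigr)/n!$ of $\ln a$, giving exactly~\eqref{form-gen-a}.

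I do not anticipate a genuine obstacle, as the argument becomes mechanical once the scaling lemma is recorded; a direct induction on $n$ would also work but requires integrating a logarithmic term by parts at every step and is correspondingly messier. The one point demanding care is the bookkeeping of the $\ln a$ terms, since $\ln a$ enters both as the additive constant in $f_{0}$ and through the identity $\ln(1+x/a)=\ln(x+a)-\ln a$; one must verify that these combine with the correct signs so that the coefficient is $-\bigl((x+a)^{n}-x^{n}\bigr)/n!$ rather than $-(x+a)^{n}/n!$. A convenient consistency check is the specialization $a=1$, for which the $\ln a$ term vanishes and \eqref{form-gen-a} reduces to the expression~\eqref{loga-0} of Section~\ref{sec-case1}.
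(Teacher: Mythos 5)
Your argument is correct, but it is not the route the paper takes. The paper's proof is a direct induction: one guesses the ansatz $f_n(x)=S_n(x)+T_n(x)\ln(x+a)$ from the first few values and then verifies that the explicit expression in \eqref{form-gen-a} satisfies $f_n'=f_{n-1}$ together with $f_n(0)=0$, which characterizes the iterated integral. You instead reduce the general-$a$ statement to the already established case $a=1$ via the decomposition $\ln(x+a)=\ln a+\ln(1+x/a)$, linearity, the fact that the $n$th iterated integral of $1$ is $x^n/n!$, and the scaling lemma $h_n(x)=a^n g_n(x/a)$; all of these steps check out, and the final bookkeeping of the two $\ln a$ contributions does produce the coefficient $-\bigl((x+a)^n-x^n\bigr)/n!$ as you claim. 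Your approach has the advantage of explaining structurally where the $\ln a$ term comes from and of recycling \eqref{An-def} rather than redoing an inductive verification with logarithmic integrations; its cost is that it inherits whatever proof one accepts for \eqref{loga-0}--\eqref{An-def} (in the paper that case is partly quoted from the reference and partly rederived from \eqref{expan-1}), whereas the paper's induction is self-contained once the ansatz is posited. One small point worth flagging: Theorem~\ref{thm-loga} is later applied with $a$ a complex root of $1+x^N$, and for complex $a$ the identity $\ln(x+a)=\ln a+\ln(1+x/a)$ requires a word about branches; this is the same caveat the paper itself incurs when it writes $\ln(x+a)-\ln a=\ln(\bar a x+1)$ in the proof of Theorem~\ref{thm-arctan}, so it does not disadvantage your argument relative to the paper's, but it should be acknowledged. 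Your $a=1$ consistency check against \eqref{loga-0} is a sensible sanity check, though of course it cannot substitute for the computation since that case is the input to your reduction.
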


\begin{proof}
A symbolic calculation of the first few values suggests the ansatz 
$f_{n}(x) = S_{n}(x) + T_{n}(x) \ln(x+a)$ for some polynomials $S_{n}, T_{n}$. The relation $f_{n}' = f_{n-1}$ and the form of $S_{n}, T_{n}$ 
given in~\eqref{form-gen-a} give the result by induction.
\end{proof}

In the special case $P(x) = 1 + x^N$, the previous result can be made more
explicit.

\begin{Thm}
\label{thm-arctan}
Let $a = u + i v$ be a root of $1+x^{N}=0$. Then the contribution of $a$ and $\bar{a} = u - i v$
to the iterated integral of $\ln(1+x^{N})$ is given by 
\begin{multline*}
	- \frac{1}{n!} \sum_{k=1}^{n} \frac{x^{k}}{k} \left[ (x+a)^{n-k} + (x+ \bar{a})^{n-k} \right] \\
	+ \frac{1}{i n!} \left[ (x+a)^{n} - (x + \bar{a})^{n} \right] \arctan \left( \frac{vx}{1+ux} \right) \\
	+ \frac{(x+a)^{n} + (x+\bar{a})^{n}}{2n!} \ln [ (1+ux)^{2} + v^{2}x^{2} ].
\end{multline*}
\end{Thm}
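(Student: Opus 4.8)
The plan is to reduce the statement to Theorem~\ref{thm-loga}, applied separately to the two complex shifts $a$ and $\bar a$, followed by an algebraic simplification that exploits the fact that the roots of $1+x^{N}$ lie on the unit circle. First I would record that over the complex numbers $1+x^{N}=\prod_{j}(x+z_{j})$, so that $\ln(1+x^{N})=\sum_{j}\ln(x+z_{j})$ with principal branches (the conjugate factors pairing up to give a real-valued logarithm for real $x$). Since iterated integration is linear, the iterated integral of $\ln(1+x^{N})$ is the sum over $j$ of the iterated integrals of $\ln(x+z_{j})$, each supplied in closed form by Theorem~\ref{thm-loga}. The content of the present theorem is thus the explicit simplification of the sum of the two contributions coming from a conjugate pair $a,\bar a$. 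One should first note that Theorem~\ref{thm-loga} survives for complex $a$: its inductive proof via $f_{n}'=f_{n-1}$, $f_{n}(0)=0$ goes through verbatim once a branch of $\ln(x+a)$ and of $\ln a$ is fixed.

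Next I would add the two instances of~\eqref{form-gen-a} with shifts $a$ and $\bar a$. The power-sum parts combine immediately into the first line of the claimed formula. For the remaining terms I would group the contribution of $a$ as
\[
	\frac{(x+a)^{n}}{n!}\bigl[\ln(x+a)-\ln a\bigr]+\frac{x^{n}}{n!}\ln a
\]
and similarly for $\bar a$. Here the arithmetic input enters: since $a$ is a root of $1+x^{N}$ we have $|a|=1$, hence $1/a=\bar a$ and, on the principal branch, $\ln a+\ln\bar a=\ln|a|^{2}=0$. Consequently the two pure $x^{n}\ln a$ pieces cancel, and
\[
	\ln(x+a)-\ln a=\ln\frac{x+a}{a}=\ln(1+\bar a x)=\ln\bigl[(1+ux)-ivx\bigr],
\]
with the conjugate term producing $\ln\bigl[(1+ux)+ivx\bigr]$.

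Finally I would split these two complex logarithms into real and imaginary parts,
\[
	\ln\bigl[(1+ux)\mp ivx\bigr]=\tfrac12\ln\bigl[(1+ux)^{2}+v^{2}x^{2}\bigr]\mp i\arctan\!\Bigl(\tfrac{vx}{1+ux}\Bigr),
\]
and collect the symmetric and antisymmetric combinations of $(x+a)^{n}$ and $(x+\bar a)^{n}$. The symmetric part yields the logarithmic term with coefficient $\tfrac{1}{2n!}\bigl[(x+a)^{n}+(x+\bar a)^{n}\bigr]$, and the antisymmetric part yields the arctangent term with coefficient $\tfrac{1}{in!}\bigl[(x+a)^{n}-(x+\bar a)^{n}\bigr]$ (using $-i=1/i$), which is exactly the statement.

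The step requiring the most care is this last one: the identity $\Arg\bigl[(1+ux)-ivx\bigr]=-\arctan\bigl(vx/(1+ux)\bigr)$ holds on the principal branch precisely when $1+ux>0$, which fails for $u<0$ once $x$ exceeds $1/|u|$. I would therefore either restrict to the range of $x$ on which the real part stays positive, or track the branch of the logarithm explicitly (equivalently, read $\arctan$ as its continuous extension along the integration path). This branch bookkeeping is the only point that goes beyond routine algebra; everything else is a direct rearrangement of~\eqref{form-gen-a}.
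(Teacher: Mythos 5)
Your proposal is correct and follows essentially the same route as the paper: apply Theorem~\ref{thm-loga} separately to $a$ and $\bar a$, use $|a|=1$ to rewrite $\ln(x+a)-\ln a=\ln(1+\bar a x)$ and to cancel the $x^{n}\ln a$ terms via $\ln a+\ln\bar a=0$, then separate the resulting logarithms into real and imaginary parts. Your closing remark about tracking the branch of the argument when $1+ux\le 0$ is a point the paper leaves implicit, but it does not change the argument.
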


\begin{proof}
First observe that 
$\ln(x+a) - \ln a = \ln(\bar{a}x+1)$; hence for $f_{0}(x) = \ln(x+a)$ Theorem~\ref{thm-loga} takes the form
\begin{equation}
	f_n(x) = -\frac{1}{n!} \sum_{k=1}^{n} \frac{x^{k} (x+a)^{n-k}}{k} + \frac{x^{n}}{n!} \ln a + \frac{(x+a)^{n}}{n!} \ln( \bar{a}x + 1).
\end{equation}
Since
\begin{align*}
	\ln(ax + 1) &= \ln |ax + 1| + i \Arg(ax+1), \\
	\ln(\bar{a}x + 1) &= \ln |ax + 1| - i \Arg(ax+1),
\end{align*}
and $\ln a + \ln \bar{a} = 2 \ln |a| = 0$, it follows that the total contribution of $a$ and $\bar{a}$ is given by 
\begin{multline*}
-\frac{1}{n!} \sum_{k=1}^{n} \frac{x^{k}}{k}  \left[ (x+a)^{n-k} + (x+ \bar{a})^{n-k} \right] + \frac{\left[ (x+a)^{n} \ln( \bar{a}x+1) + (x+ \bar{a})^{n} \ln(ax+1) \right]}{n!} \\
	= -\frac{1}{n!} \sum_{k=1}^{n} \frac{x^{k}}{k} \left[ (x+a)^{n-k} + (x + \bar{a})^{n-k} \right] + \frac{ \left[ (x+a)^{n} + (x + \bar{a})^{n} \right]}{n!} \ln |a x + 1 | \\
	- \frac{ i \left[ (x+a)^{n} - (x + \bar{a})^{n} \right]}{n!} \Arg(ax+1).
\end{multline*}
The stated result comes from expressing the logarithmic terms in their 
real and imaginary parts.
\end{proof}

\begin{Cor}
\label{cor-1}
Let $n \in \mathbb{N}$. Then 
\[
\sum_{k=1}^{n} \frac{1}{k} \int_{0}^{x} t^{k} (t+a)^{n-k} \, dt = 
\frac{1}{n+1} \sum_{k=1}^{n} \frac{x^{k} (x+a)^{n+1-k}}{k} + 
\frac{ \left[ x^{n+1} - (x+a)^{n+1} + a^{n+1} \right]}{(n+1)^{2}}.
\]
\end{Cor}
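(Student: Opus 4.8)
The plan is to verify the identity by differentiation, since both sides are polynomials in $x$ (the left-hand side being an antiderivative normalized to vanish at $x=0$). First I would check that both sides vanish at $x=0$: the integrals on the left are empty, while on the right every term of the sum carries a factor $x^{k}$ with $k\geq 1$, and the final fraction evaluates to $(0 - a^{n+1} + a^{n+1})/(n+1)^{2} = 0$. Hence it suffices to show that the derivatives with respect to $x$ agree.

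By the fundamental theorem of calculus, differentiating the left-hand side gives $\sum_{k=1}^{n} \frac{1}{k} x^{k}(x+a)^{n-k}$. For the right-hand side, the final fraction contributes $\frac{x^{n} - (x+a)^{n}}{n+1}$, and the main sum must be differentiated term by term via the product rule. Writing the coefficient produced by the product rule as $\frac{n+1-k}{k} = \frac{n+1}{k} - 1$, the piece $\frac{n+1}{k}$ reproduces $(n+1)$ times the target sum $\sum_{k=1}^{n} \frac{1}{k} x^{k}(x+a)^{n-k}$, which after division by $n+1$ yields exactly the derivative of the left-hand side.

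The remaining pieces combine into two sums, $\sum_{k=1}^{n} x^{k-1}(x+a)^{n+1-k}$ (from differentiating the power $x^{k}$) and $-\sum_{k=1}^{n} x^{k}(x+a)^{n-k}$ (the $-1$ part). After shifting the index in the first of these to $j = k-1$, the two sums telescope: the overlapping interior terms cancel, and only the boundary terms survive, collapsing the difference to $(x+a)^{n} - x^{n}$. Dividing by $n+1$ and adding the contribution $\frac{x^{n} - (x+a)^{n}}{n+1}$ of the final fraction, these two terms cancel exactly, leaving only the target sum. This matches the derivative of the left-hand side and completes the argument.

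I expect the only delicate step to be the telescoping cancellation: one must track the index shift carefully so that the boundary terms of the two sums produce precisely $(x+a)^{n} - x^{n}$, which is what annihilates the contribution of the final fraction. Everything else is routine differentiation and bookkeeping. As an alternative, one could instead derive the identity from Theorem~\ref{thm-loga} by matching the polynomial parts on the two sides of $f_{n+1}(x) = \int_{0}^{x} f_{n}(t)\,dt$, but the direct verification above avoids having to disentangle the logarithmic contributions.
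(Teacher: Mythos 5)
Your proof is correct: the check at $x=0$ is right, the derivative of the left side is $\sum_{k=1}^{n}\frac{1}{k}x^{k}(x+a)^{n-k}$ by the fundamental theorem of calculus, and the product-rule computation on the right, with the split $\frac{n+1-k}{k}=\frac{n+1}{k}-1$, does produce the target sum plus the two residual sums $\sum_{k=1}^{n}x^{k-1}(x+a)^{n+1-k}$ and $-\sum_{k=1}^{n}x^{k}(x+a)^{n-k}$, whose difference after the shift $j=k-1$ is exactly $(x+a)^{n}-x^{n}$ (the interior terms agree pairwise; only the $j=0$ and $k=n$ boundary terms survive), cancelling the contribution $\frac{x^{n}-(x+a)^{n}}{n+1}$ of the last fraction. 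This is a genuinely different route from the paper's: the paper obtains the identity by integrating the closed form of $f_{n}$ in Theorem~\ref{thm-loga} and matching it against the closed form of $f_{n+1}$, so that the logarithmic terms $\ln a$ and $\ln(x+a)$ cancel and the polynomial parts yield the corollary; your argument never touches the logarithms and is a self-contained verification of a polynomial identity. What the paper's approach buys is that the identity appears as a natural byproduct of the iterated-integral structure (and explains where the right-hand side comes from); what yours buys is independence from Theorem~\ref{thm-loga} and complete explicitness --- the only cost is that the right-hand side must be guessed in advance rather than derived. One small quibble: the cancellation you describe is not really telescoping (no consecutive-difference structure); it is simply that the two sums coincide after reindexing except at the endpoints. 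That does not affect correctness.
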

\begin{proof}
Integrate both sides of the identity in Theorem \ref{thm-loga} 
and use the relation $f_{n-1}' = f_{n}$ to obtain the result inductively. 
\end{proof}

\begin{Note}

The identity in Corollary \ref{cor-1} can be expressed in terms of the 
function
\begin{equation}
	\Phi_{n}(x,a):= \sum_{k=1}^{n} \frac{1}{k} x^{k}(x+a)^{n-k}
\end{equation}
in the form 
\begin{equation}
	\int_{0}^{x} \Phi_{n}(t,a) \, dt = \frac{x+a}{n+1} \Phi_{n}(x,a) + \frac{1}{(n+1)^{2}} \left[ x^{n+1}+a^{n+1}-(x+a)^{n+1} \right].
\end{equation}
The function $\Phi_{n}(x,a)$ admits the hypergeometric representation 
\[
	\Phi_{n}(x,a) = - \frac{x^{n+1}}{(n+1)(x+a)} \,_{2}F_{1}\Big(\begin{array}{c}
1, 1+n\\
2+n \end{array} ; \frac{x}{x+a}\Big)
-(x+a)^{n} \ln \left( \frac{a}{x+a} \right).
\]

With this representation, the identity in Corollary \ref{cor-1} now becomes

\[
	\int_{0}^{x} \left( \frac{t}{1-t} \right)^{n+1} 
\,_{2}F_{1}\Big(\begin{array}{c}
1, 1+n\\
2+n \end{array} ; t\Big) \, \frac{dt}{1-t} = 
\frac{1}{n+1} \left( \frac{x}{1-x} \right)^{n+1} 
\left[ 
\,_{2}F_{1}\Big(\begin{array}{c}
1, 1+n\\
2+n \end{array} ; x \Big) - 1 \right].
\]
\end{Note}

\section{The iterated integral of $\ln(1+x^{2})$} \label{sec-case2}
\setcounter{equation}{0}

In this section we consider the iterated integral of $f_{0}(x) = \ln(1+x^{2})$ defined by
\begin{equation}\label{loga-2}
	f_{n}(x) = \int_{0}^{x} f_{n-1}(t) \, dt.
\end{equation}

The first few examples, given by 
\begin{align*}
	f_{1}(x) &= -2x + 2 \arctan x + x \, \ln(1+ x^{2}) \\
	f_{2}(x) &= - \tfrac{3}{2}x^{2} + 2x \, \arctan x + \tfrac{1}{2}(x^{2}-1) \ln(1+x^{2}) \\
	f_{3}(x) &= - \tfrac{11}{18}x^{3} + \tfrac{1}{3}x + (x^{2} - \tfrac{1}{3}) \, \arctan x + \left( \tfrac{1}{6}x^{3} - \tfrac{1}{2}x \right) \ln(1+x^{2}),
\end{align*}

suggest the form 
\begin{equation}
f_{n}(x) = A_{n,2}(x) + B_{n,2}(x) \arctan x + C_{n,2}(x) \ln(1+x^{2})
\label{form}
\end{equation}
for some polynomials $A_{n,2}, B_{n,2}, C_{n,2}$.
%The first few values are
%\begin{eqnarray}
%A_{0,2} = 0 & B_{0,2} = 0 & C_{0,2} = 1 \label{initial-data} \\
%A_{1,2} = -2x & B_{1,2} = 2 & C_{1,2} = x \nonumber \\
%A_{2,2} = -\tfrac{3}{2}x^{2} & B_{2,2} = 2x & 
%C_{2,2} = \tfrac{1}{2}x^{2} - \tfrac{1}{2}. \nonumber 
%\end{eqnarray}
Theorem~\ref{thm-arctan} can be employed to obtain a closed form for these 
polynomials.  It follows that $f_n(x)$ satisfies
\begin{multline}
\label{expression}
	n! f_{n}(x) = - \sum_{k=1}^{n} \frac{x^{k}}{k} \left[ (x+i)^{n-k} + (x-i)^{n-k} \right] \\
	- i \left[ (x+i)^{n} - (x-i)^{n} \right] \arctan x + \frac{1}{2} \left[ (x+i)^{n} + (x-i)^{n} \right] \ln(1 + x^{2}).
\end{multline}

The expressions for $A_{n,2}, \, B_{n,2}, \, C_{n,2}$ may be read from here.

\subsection{Recurrences} \label{sec-recu}

The polynomials $A_{n,2}, B_{n,2}, C_{n,2}$ can also be found as solutions to 
certain recurrences.
Differentiation of~\eqref{loga-2} yields $f_{n}'(x) = f_{n-1}(x)$. It is 
easy to check that this relation, with the initial conditions 
$f_{n}(0) = 0$ and $f_{0}(x) = \ln(1+x^{2})$, is equivalent to~\eqref{loga-2}.
Replacing the ansatz~\eqref{form} produces
\begin{multline*}
A_{n,2}'(x) + B_{n,2}'(x) \arctan x + \frac{B_{n,2}(x)}{1+x^{2}} + 
C_{n,2}'(x) \ln(1+x^{2}) + C_{n,2}(x) \frac{2x}{1+x^{2}} \\
= A_{n-1,2}(x) + B_{n-1,2}(x) \arctan x + C_{n-1,2}(x) \ln(1+x^{2}).
\end{multline*}
A natural linear independence assumption yields the system of recurrences
\begin{align}
	B_{n,2}'(x) &= B_{n-1,2}(x) \label{recB} \\
	B_{0,2}(x) &= 0 \nonumber \\
	C_{n,2}'(x) &= C_{n-1,2}(x) \label{recC} \\
	C_{0,2}(x) &= 1 \nonumber \\
	A_{n,2}'(x) &= A_{n-1,2}(x) - \frac{B_{n,2}(x) + 2xC_{n,2}(x)}{1+x^{2}} \label{recA} \\
	A_{0,2}(x) &= 0.  \nonumber
\end{align}

\begin{Note}
The definition~\eqref{loga-2} determines completely the function $f_{n}(x)$.
In particular, given the form~\eqref{form}, the polynomials $A_{n,2}, 
B_{n,2}$ and $C_{n,2}$ are uniquely specified. Observe however that 
the recurrence~\eqref{recB} does not determine $B_{n,2}(x)$
uniquely. At each step, there is a constant of integration to be determined. 
In order to address this ambiguity, the first few values of $B_{n,2}(0)$ are 
determined {\em empirically}, and the condition 
\begin{equation}
\label{initB}
B_{n,2}(0) = \begin{cases} 
 2 (-1)^{\tfrac{n-1}{2}}/n! & \text{if $n$ is odd} \\
 0 & \text{if $n$ is even}
 \end{cases}
\end{equation}
is added to the recurrence~\eqref{recB}. The polynomials $B_{n,2}(x)$ are now 
uniquely determined.  Similarly, the initial condition
\begin{equation}
C_{n,2}(0) = \begin{cases} 
 (-1)^{\tfrac{n}{2}}/n! & \text{if $n$ is even} \\
 0 & \text{if $n$ is odd}
 \end{cases}
\label{initC}
\end{equation}
adjoined to~\eqref{recC}, determines $C_{n,2}$. The initial condition 
imposed on $A_{n,2}$ is simply $A_{n,2}(0) = 0$. 
\end{Note}

The recurrence~\eqref{recB} is then employed to produce a list of the first 
few values of $B_{n,2}(x)$. These are then used to guess the closed-form
expression for this family.  The same is true for $C_{n,2}(x)$. 

\begin{Prop}
\label{prop-easy}
The recurrence~\eqref{recB} and the (heuristic) initial condition 
\eqref{initB} yield
\begin{align}
	B_{n,2}(x)	&= \frac{2}{n!} \sum_{j=0}^{\tfrac{n-1}{2}} (-1)^{j} \binom{n}{2j+1} x^{n-2j-1} \label{form-B} \\
			&= \frac{1}{i \, n!} \left[ (x+i)^{n}- (x-i)^{n} \right]. \nonumber
\end{align}
Similarly, the polynomial $C_{n,2}$ is given by 
\begin{align}
	C_{n,2}(x)	& = \frac{1}{n!} 
 \sum_{j=0}^{\left\lfloor \tfrac{n}{2} \right\rfloor} (-1)^{j} \binom{n}{2j} x^{n-2j} \label{form-C} \\
			& = \frac{1}{2 \, n!} \left[ (x+i)^{n}+ (x-i)^{n} \right]. \nonumber
\end{align}
In particular, the degree of $B_{n,2}$ is $n-1$, and the degree of $C_{n,2}$ is $n$.
\end{Prop}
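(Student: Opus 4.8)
The plan is to exploit the uniqueness already recorded in the Note: since the recurrence~\eqref{recB} together with the prescribed value $B_{n,2}(0)$ from~\eqref{initB} determines $B_{n,2}$ completely, and likewise~\eqref{recC} with~\eqref{initC} determines $C_{n,2}$, it suffices to verify that the proposed closed forms satisfy both the differential recurrence and the initial data. I would work primarily with the compact complex representations $\frac{1}{i\,n!}\left[(x+i)^{n}-(x-i)^{n}\right]$ and $\frac{1}{2\,n!}\left[(x+i)^{n}+(x-i)^{n}\right]$, treating the equivalence with the explicit binomial sums as a separate short computation.

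First I would establish that the two stated forms agree. Expanding $(x\pm i)^{n}$ by the binomial theorem and collecting the powers of $i$, one has $i^{k}-(-i)^{k}=0$ for $k$ even and $i^{k}-(-i)^{k}=2i(-1)^{(k-1)/2}$ for $k$ odd, while $i^{k}+(-i)^{k}=2(-1)^{k/2}$ for $k$ even and vanishes for $k$ odd. Substituting $k=2j+1$ in the difference and $k=2j$ in the sum reproduces exactly~\eqref{form-B} and~\eqref{form-C}; this is the only genuinely computational step, and it is routine.

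Next I would check the recurrences and base cases. Differentiating the complex form of $B$ gives $\frac{1}{i\,n!}\left[n(x+i)^{n-1}-n(x-i)^{n-1}\right]$, where the factor $n$ cancels against $n!$ to produce precisely the $(n-1)$-st member, confirming~\eqref{recB}; the computation for $C$ is identical. The base cases $B_{0,2}(x)=\frac{1}{i}\left[1-1\right]=0$ and $C_{0,2}(x)=\frac{1}{2}\left[1+1\right]=1$ match the stated values.

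The main, though still mild, obstacle is matching the empirically guessed constants $B_{n,2}(0)$ and $C_{n,2}(0)$. Setting $x=0$ reduces these to $\frac{1}{i\,n!}\left[i^{n}-(-i)^{n}\right]$ and $\frac{1}{2\,n!}\left[i^{n}+(-i)^{n}\right]$, and the claim then follows from a parity analysis of $i^{n}$: writing $i^{n}-(-i)^{n}=i^{n}\left(1-(-1)^{n}\right)$, this vanishes for $n$ even and equals $2i^{n}=2i(-1)^{(n-1)/2}$ for $n$ odd, which yields~\eqref{initB}, and the symmetric computation for $i^{n}+(-i)^{n}$ yields~\eqref{initC}. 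Finally, the degree claims read off from the leading ($j=0$) terms $\frac{2}{(n-1)!}x^{n-1}$ and $\frac{1}{n!}x^{n}$, both of which are nonzero. I would also remark that these closed forms can alternatively be read off directly from the coefficients in~\eqref{expression}, giving an independent confirmation.
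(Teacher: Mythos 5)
Your proposal is correct and takes the same route the paper intends: the paper's proof is just the one-line assertion that the formulas ``follow directly from the recurrences,'' and you have simply written out that verification in full (equivalence of the two forms, the differentiation check, the base cases, the initial conditions, and the degrees), all of which check out. No gaps.
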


\begin{proof}
This follows directly from the recurrences~\eqref{recB} and \eqref{recC}.
\end{proof}

\begin{Cor}
The recurrence for $A_{n,2}$ can be written as 
\begin{equation}
A_{n,2}'(x) = A_{n-1,2}(x) - \frac{1}{n!} \left[ (x+i)^{n-1} + (x-i)^{n-1} 
\right]. 
\end{equation}
In particular, the degree of $A_{n,2}$ is $n$.
\end{Cor}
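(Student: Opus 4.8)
The plan is to substitute the closed forms for $B_{n,2}$ and $C_{n,2}$ from Proposition~\ref{prop-easy} directly into the recurrence~\eqref{recA} and to simplify the resulting fraction. Starting from
\[
B_{n,2}(x) = \frac{1}{i\,n!}\left[(x+i)^{n}-(x-i)^{n}\right], \qquad C_{n,2}(x) = \frac{1}{2\,n!}\left[(x+i)^{n}+(x-i)^{n}\right],
\]
I would compute the numerator $B_{n,2}(x)+2xC_{n,2}(x)$. Writing $1/i=-i$ and collecting the coefficients of $(x+i)^{n}$ and $(x-i)^{n}$ separately, the numerator factors as
\[
B_{n,2}(x)+2xC_{n,2}(x) = \frac{1}{n!}\left[(x+i)^{n}(x-i)+(x-i)^{n}(x+i)\right].
\]
Since $1+x^{2}=(x+i)(x-i)$, dividing by $1+x^{2}$ cancels one factor of $(x+i)$ from the first summand and one factor of $(x-i)$ from the second, leaving exactly $\frac{1}{n!}\left[(x+i)^{n-1}+(x-i)^{n-1}\right]$. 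Inserting this into~\eqref{recA} yields the stated recurrence.

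For the degree statement I would argue by induction. Assuming $\deg A_{n-1,2}=n-1$, the right-hand side of the recurrence just derived is a polynomial of degree at most $n-1$, since both $A_{n-1,2}$ and $(x+i)^{n-1}+(x-i)^{n-1}$ have degree $n-1$; hence $A_{n,2}'$ has degree at most $n-1$ and $\deg A_{n,2}\le n$. To see that equality holds, I track the leading coefficient. Writing $L_{n}$ for the coefficient of $x^{n}$ in $A_{n,2}$ and comparing the coefficients of $x^{n-1}$ on both sides — using that $(x+i)^{n-1}+(x-i)^{n-1}$ has leading term $2x^{n-1}$ — gives the scalar relation $nL_{n}=L_{n-1}-2/n!$. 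Setting $M_{n}=n!\,L_{n}$ and multiplying through by $(n-1)!$ turns this into the telescoping recurrence $M_{n}=M_{n-1}-2/n$ with $M_{1}=-2$, whose solution is $M_{n}=-2H_{n}$. Therefore
\[
L_{n}=-\frac{2H_{n}}{n!},
\]
which is nonzero for every $n\ge 1$ since $H_{n}>0$, so that $\deg A_{n,2}=n$.

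The algebraic simplification is routine once the factorization $B_{n,2}+2xC_{n,2}=\tfrac{1}{n!}[(x+i)^{n}(x-i)+(x-i)^{n}(x+i)]$ is spotted, so the only point requiring genuine care is the degree claim: a priori the integration implicit in~\eqref{recA} could lower the degree if the leading coefficient of $A_{n,2}'$ were to cancel. The explicit evaluation $L_{n}=-2H_{n}/n!$ rules this out, and it has the pleasant feature of reproducing the harmonic numbers that already governed the coefficients in the one-dimensional case~\eqref{An-def}.
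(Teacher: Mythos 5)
Your substitution argument is exactly the paper's proof: the paper simply says to replace the explicit expressions for $B_{n,2}$ and $C_{n,2}$ in~\eqref{recA}, and your factorization $B_{n,2}+2xC_{n,2}=\tfrac{1}{n!}\left[(x+i)^{n}(x-i)+(x-i)^{n}(x+i)\right]$ followed by cancellation against $1+x^{2}=(x+i)(x-i)$ is the intended computation. The paper offers no justification for the degree claim, so your leading-coefficient induction is a genuine addition; it is correct, and the value $L_{n}=-2H_{n}/n!$ checks against $A_{1,2}=-2x$, $A_{2,2}=-\tfrac{3}{2}x^{2}$, and $A_{3,2}=-\tfrac{11}{18}x^{3}+\cdots$, confirming that no cancellation of the top coefficient can occur.
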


\begin{proof}
Simply replace the explicit expressions for $B_{n,2}$ and 
$C_{n,2}$ in the recurrence~\eqref{recA}.
\end{proof}

\subsection{Trigonometric forms}

A trigonometric form of the polynomials 
$B_{n,2}$ and $C_{n,2}$ is established next.

\begin{Prop}
The polynomials $B_{n,2}$ and $C_{n,2}$ are given by 
\begin{align*}
	B_{n,2}(x) &= \frac{2}{n!} (x^{2}+1)^{n/2} \sin( n \arccot x) \\
	C_{n,2}(x) &= \frac{1}{n!} (x^{2}+1)^{n/2} \cos( n \arccot x).
\end{align*} In particular, 
\begin{equation}
\frac{C_{n,2}(x)}{B_{n,2}(x)} = \frac{1}{2} \cot( n \arccot x). \label{strange}
\end{equation}
\end{Prop}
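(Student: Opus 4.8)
The plan is to express both polynomials through the closed forms already
established in Proposition~\ref{prop-easy}, namely
$B_{n,2}(x) = \tfrac{1}{i\,n!}\left[(x+i)^{n} - (x-i)^{n}\right]$ and
$C_{n,2}(x) = \tfrac{1}{2\,n!}\left[(x+i)^{n} + (x-i)^{n}\right]$, and to convert
the complex powers into polar form. First I would write $x + i = r\,e^{i\theta}$
with modulus $r = \sqrt{x^{2}+1}$; the argument $\theta$ must be chosen so that
$\cos\theta = x/\sqrt{x^{2}+1}$ and $\sin\theta = 1/\sqrt{x^{2}+1}$, which is
exactly $\theta = \arccot x$ (since $\cot\theta = x$ with $\theta\in(0,\pi)$).
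Then $x - i = r\,e^{-i\theta}$ is the conjugate.

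With this substitution, $(x+i)^{n} = r^{n} e^{in\theta}$ and
$(x-i)^{n} = r^{n} e^{-in\theta}$, so that $(x+i)^{n} - (x-i)^{n}
= r^{n}\left(e^{in\theta} - e^{-in\theta}\right) = 2i\,r^{n}\sin(n\theta)$
and $(x+i)^{n} + (x-i)^{n} = 2 r^{n}\cos(n\theta)$. Substituting these into the
closed forms gives directly
\[
	B_{n,2}(x) = \frac{2}{n!}\,(x^{2}+1)^{n/2}\sin(n\arccot x),
	\qquad
	C_{n,2}(x) = \frac{1}{n!}\,(x^{2}+1)^{n/2}\cos(n\arccot x),
\]
since $r^{n} = (x^{2}+1)^{n/2}$ and $\theta = \arccot x$. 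The ratio in
\eqref{strange} then follows at once by dividing the two expressions: the
factors $(x^{2}+1)^{n/2}$ cancel, and $\tfrac{2}{n!}$ over $\tfrac{1}{n!}$
contributes a factor of $2$ in the numerator, yielding
$C_{n,2}(x)/B_{n,2}(x) = \tfrac{1}{2}\cos(n\arccot x)/\sin(n\arccot x)
= \tfrac{1}{2}\cot(n\arccot x)$.

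The only point requiring care, and thus the main obstacle, is justifying the
choice of branch for the argument $\theta$: one must confirm that
$\theta = \arccot x$ is consistent with the principal branch implicit in the
polynomial identities, and that the formulas hold for all real $x$ rather than
only on a restricted range. Since $\arccot$ takes values in $(0,\pi)$ and
$(x^{2}+1)^{n/2}$ is unambiguous (the exponent is applied to a positive real),
both sides of each asserted identity are polynomials in $x$ agreeing as analytic
functions on an interval; because polynomials agreeing on an interval agree
everywhere, the identities extend to all real $x$. This observation removes any
lingering concern about branch cuts and completes the argument.
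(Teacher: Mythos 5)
Your argument is correct and coincides with the paper's own (first) proof: both write $x+i=\sqrt{x^{2}+1}\,e^{i\arccot x}$, apply De Moivre, and substitute into the closed forms of Proposition~\ref{prop-easy}. The extra discussion of branch choice is a harmless elaboration of what the paper leaves implicit.
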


\begin{proof}
The polar form 
\begin{equation}
	x+i = \sqrt{x^{2}+1} \, \left[ \cos (\arccot x ) + i \sin (\arccot x) \right]
\end{equation}
produces
\begin{equation}
	(x+i)^{n} = (x^{2}+1)^{n/2} \left[ \cos (n \arccot x) + i \sin (n \arccot x) \right].
\end{equation}
A similar expression for $(x-i)^{n}$ gives the result.
\end{proof}

\begin{proof}
A second proof follows from the Taylor series 
\begin{equation}
\frac{\sin( z \arctan t ) }{(1+t^{2})^{z/2}} = 
\sum_{k=0}^{\infty} \frac{(-1)^{k} (z)_{2k+1} }{(2k+1)!} t^{2k+1} 
\label{series-sine}
\end{equation}
and 
\begin{equation}
\frac{\cos( z \arctan t ) }{(1+t^{2})^{z/2}} = 
\sum_{k=0}^{\infty} \frac{(-1)^{k} (z)_{2k} }{(2k)!} t^{2k} 
\end{equation}
where $(z)_n$ denotes the Pochhammer symbol. These series were
established in~\cite{boesmo} in the context of integrals related to the 
Hurwitz zeta function. 

Indeed, the formula for $B_{n,2}(x)$ comes from \eqref{series-sine} replacing
 $t$ by $1/x$ and $z$ by $-n$ to obtain
\begin{equation}
\sin(n \arccot x) \, (x^{2}+1)^{n/2} = - x^{n} 
\sum_{k=0}^{\infty} \frac{(-1)^{k} (-n)_{2k+1}}{(2k+1)!} x^{-2k-1}.
\end{equation}
The result~\eqref{form-B} now follows from the identity
\begin{equation}
(-n)_{2k+1} = \begin{cases} 
	- n!/(n-2k-1)! & \text{if $2k+1 \leq n$} \\
	0 & \text{otherwise.}
\end{cases}
\end{equation}
A similar argument gives the form of $C_{n,2}(x)$ in~\eqref{form-C}.
\end{proof}

\begin{Note}
The rational function $R_{n}$ that gives 
\begin{equation}
\cot(n \theta) = R_{n}(\cot \theta) 
\end{equation}
appears in~\eqref{strange} in the form 
\begin{equation}
R_{n}(x) = \frac{2C_{n,2}(x)}{B_{n,2}(x)}. 
\end{equation}
This rational function plays a crucial role in the development of 
{\em rational Landen transformations}~\cite{manna-moll2}.  These are 
transformations of the coefficients of a rational integrand that preserve 
the value of a definite integral. For example, the map 
\begin{eqnarray*}
	a & \mapsto & a \left( (a+3c)^{2} - 3b^{2} \right)/\Delta \\
	b & \mapsto & b \left( 3(a-c)^{2} - b^{2} \right)/\Delta \\
	c & \mapsto & c \left( (3a+c)^{2} - 3b^{2} \right)/\Delta,
\end{eqnarray*}
where $\Delta = (3a+c)(a+3c) - b^{2}$, preserves the value of
\begin{equation}
\int_{-\infty}^{\infty} \frac{dx}{ax^{2}+bx+c} = \frac{2 \pi}{\sqrt{4ac-b^{2}}}.
\end{equation}
The reader will find in~\cite{manna-moll3} a survey of this type of 
transformation and~\cite{manna-moll1} the example given above.  The
reason for the appearance of $R_{n}(x)$ in 
the current context remains to be clarified.
\end{Note}

\subsection{An automatic derivation of a recurrence for $A_{n,2}$}\label{sec-automatic}

The formula~\eqref{class-1} for 
for the iterated integral can be used in the context of computer algebra 
methods. In the case discussed here, the integral
\begin{equation}\label{int2}
 I_n(x) = \frac{1}{(n-1)!} \int_0^x (x-t)^{n-1} \ln(1+t^2) \, dt
\end{equation}
gives the desired iterated integrals of $\ln(1+x^2)$ for $n\geq 1$.

A standard application of the holonomic systems approach, as implemented
in the \emph{Mathematica} package 
\texttt{HolonomicFunctions}~\cite{Koutschan09}, yields a 
recurrence in~$n$ for~\eqref{int2}. The reader will find 
in~\cite{moll-gr18} a description of the use of this package in the 
evaluation of definite integrals.  The recurrence 
\begin{multline}
	n^2(n-1)I_n(x) \\
	= x(3n-2)(n-1) \, I_{n-1}(x) - \left(3nx^2-4x^2+n\right)I_{n-2}(x) \\ + x \left(x^2+1\right) I_{n-3}(x)  \label{rec2}
\end{multline}
is delivered immediately by the package. Using the linear independence
of $\arctan x$ and $\ln(1+x^{2})$, it follows that each of the sequences
$A_{n,2}$, $B_{n,2}$, and $C_{n,2}$ must also 
satisfy the recurrence~\eqref{rec2}.
Symbolic methods for solving recurrences are employed next to
produce the explicit expressions for $A_{n,2}$, $B_{n,2}$, and $C_{n,2}$ given 
above. 

Petkov\v{s}ek's algorithm Hyper~\cite{Petkovsek92} 
(as implemented in the \emph{Mathematica} package \texttt{Hyper}, for example)
computes a basis of hypergeometric solutions of a linear recurrence with
polynomial coefficients. Given~\eqref{rec2} as input, it outputs the two
solutions $(x+i)^n/n!$ and $(x-i)^n/n!$. The initial values are used to 
obtain the correct linear combinations of these solutions. This produces the 
expressions for $B_{n,2}(x)$ and $C_{n,2}(x)$ given in Proposition~\ref{prop-easy}.

However, the third solution is not hypergeometric and it will give the
polynomials~$A_{n,2}(x)$. It can be found by Schneider's \emph{Mathematica}
package \texttt{Sigma}~\cite{Schneider07}:
\[
	A_{n,2}(x)=\frac{i}{n!} \left(x\left((x+i)^n-(x-i)^n\right)
	+ \sum_{k=2}^n \frac{x^k \left((x-i)^{n-k+1} - (x+i)^{n-k+1}\right)}{(k-1) k} \right),
\]
with the initial values
\[
	A_{0,2}(x) = 0,\qquad A_{1,2}(x) = -2x,\qquad A_{2,2}(x) = -\tfrac{3}{2}x^2.
\]

In summary: 

\begin{Thm}
\label{thm-an2}
Define $a_{k} = k(k-1)$ for $k \geq 2$ and $a_{1}=-1$. The polynomial $A_{n,2}(x)$
introduced in~\eqref{form} is given by
\begin{equation}
\label{an2-type1}
A_{n,2}(x) = \frac{1}{i \, n!} \sum_{k=1}^{n} \frac{x^{k}}{a_{k}} 
\left[ (x+i)^{n-k+1}-(x-i)^{n-k+1} \right].
\end{equation}
This can be written as
\begin{equation}
A_{n,2}(x) = \frac{1}{n!} \sum_{k=1}^{n} \frac{(n-k+1)!}{a_{k}} 
x^{k} B_{n-k+1,2}(x).
\end{equation}
\end{Thm}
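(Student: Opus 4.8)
The plan is to obtain the stated formula from the pure polynomial term already extracted in~\eqref{expression}, and then to repackage the two forms using Proposition~\ref{prop-easy}. Reading off from~\eqref{expression} the part that is free of $\arctan x$ and $\ln(1+x^2)$, the polynomial $A_{n,2}$ is
\[
A_{n,2}(x) = -\frac{1}{n!}\sum_{k=1}^{n} \frac{x^{k}}{k}\left[(x+i)^{n-k} + (x-i)^{n-k}\right].
\]
This identification is legitimate because $1$, $\arctan x$, and $\ln(1+x^2)$ are linearly independent, so the coefficient polynomials in the ansatz~\eqref{form} are uniquely determined; and this expression indeed vanishes at $x=0$, matching the normalization $A_{n,2}(0)=0$. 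It remains to show that it equals the right-hand side of~\eqref{an2-type1}.

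Abbreviating $s_{m} = (x+i)^{m} + (x-i)^{m}$ and $d_{m} = (x+i)^{m} - (x-i)^{m}$, the key algebraic identity is
\[
i\,s_{m} = d_{m+1} - x\,d_{m},
\]
obtained by expanding $(x\pm i)^{m+1} = (x\pm i)(x\pm i)^{m}$ and subtracting. Substituting $s_{n-k} = (d_{n-k+1} - x\,d_{n-k})/i$ into the display above splits $A_{n,2}$ into two sums. In the sum arising from the $x\,d_{n-k}$ term I shift the summation index by one unit, which converts $x^{k+1}d_{n-k}/k$ into $x^{k}d_{n-k+1}/(k-1)$; the resulting top boundary term (index $k=n+1$) carries the factor $d_{0}=0$ and therefore drops out.

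Combining the two sums term by term, and factoring out $1/(i\,n!)$, the coefficient of $x^{k}d_{n-k+1}$ for $2 \le k \le n$ becomes $-1/k + 1/(k-1) = 1/\bigl(k(k-1)\bigr) = 1/a_{k}$, while the solitary $k=1$ contribution from the first sum is $-x\,d_{n} = (x/a_{1})\,d_{n}$ with $a_{1}=-1$. This reproduces exactly~\eqref{an2-type1}, and at the same time explains the anomalous value $a_{1}=-1$: it is the leftover boundary term that has no shifted partner to combine with. The second form of the theorem is then immediate, since the closed form $B_{m,2}(x) = \frac{1}{i\,m!}\left[(x+i)^{m}-(x-i)^{m}\right]$ of Proposition~\ref{prop-easy} gives $d_{n-k+1} = i\,(n-k+1)!\,B_{n-k+1,2}(x)$; inserting this into~\eqref{an2-type1} cancels the factor $i$ and yields $\frac{1}{n!}\sum_{k=1}^{n} \frac{(n-k+1)!}{a_{k}}\,x^{k}B_{n-k+1,2}(x)$.

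The main obstacle is the index-shifting bookkeeping of the second and third paragraphs. Care with both the vanishing $k=n+1$ boundary term and the unmatched $k=1$ term is essential to obtain the correct coefficients; everything else is routine expansion.
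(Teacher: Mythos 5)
Your argument is correct, and it reaches the theorem by a genuinely different route from the paper's. The paper derives Theorem~\ref{thm-an2} by computer algebra: \texttt{HolonomicFunctions} produces the recurrence~\eqref{rec2}, Petkov\v{s}ek's algorithm gives the two hypergeometric solutions $(x\pm i)^n/n!$, and Schneider's \texttt{Sigma} supplies the third, non-hypergeometric solution, which together with the initial values $A_{0,2}=0$, $A_{1,2}=-2x$, $A_{2,2}=-\tfrac{3}{2}x^2$ yields~\eqref{an2-type1}; the equivalence with the roots-based expression is then only asserted in a remark. You instead start from~\eqref{expression} (a consequence of Theorem~\ref{thm-arctan}), isolate $A_{n,2}(x)=-\tfrac{1}{n!}\sum_{k=1}^n \tfrac{x^k}{k}\bigl[(x+i)^{n-k}+(x-i)^{n-k}\bigr]$ by linear independence, and convert it via the identity $i\,s_m=d_{m+1}-x\,d_m$ and an index shift --- in effect a summation by parts. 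I verified the bookkeeping: the $k=n+1$ boundary term dies because $d_0=0$, the unmatched $k=1$ term is exactly the $a_1=-1$ contribution, the combined coefficient $-\tfrac{1}{k}+\tfrac{1}{k-1}=\tfrac{1}{k(k-1)}$ gives $a_k$ for $2\le k\le n$, and the second form follows from $d_m=i\,m!\,B_{m,2}(x)$ as in Proposition~\ref{prop-easy}. Your proof is self-contained and human-checkable, and it explains the otherwise anomalous value $a_1=-1$; the paper's approach is mechanical and transfers unchanged to other integrands, but gives less insight into why this particular closed form appears. Both arguments rest on the same linear-independence assumption used to extract the coefficient polynomials from the ansatz~\eqref{form}.
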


Note that the expression for $A_{n,2}$ given before is equivalent to the forms 
appearing in Theorem~\ref{thm-an2}.

\begin{Note} 
Similar procedures applied to the case of $\ln(1+x)$ 
yield the evaluation given in~\eqref{An-def}.
\end{Note}

\section{Arithmetical properties}\label{sec-arith}

In this section we discuss arithmetical properties of the polynomials $B_{n,2}$ and $A_{n,2}$.  The explicit formula for $B_{n,2}$ produces some elementary results. 

\begin{Prop}
Let $m, n \in \mathbb{N}$ such that $m$ divides $n$. Then $B_{m,2}(x)$ divides 
$B_{n,2}(x)$ as polynomials in $\mathbb{Q}[x]$.
\end{Prop}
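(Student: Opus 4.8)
The plan is to exploit the closed form $B_{n,2}(x) = \tfrac{1}{i\,n!}\left[(x+i)^n - (x-i)^n\right]$ from Proposition~\ref{prop-easy} and reduce the divisibility statement to an algebraic factorization over the complex numbers. Write $u = x+i$ and $v = x-i$, so that $n!\,B_{n,2}(x) = \tfrac{1}{i}(u^n - v^n)$ and $m!\,B_{m,2}(x) = \tfrac{1}{i}(u^m - v^m)$. The key observation is that when $m \mid n$, say $n = qm$, the polynomial identity $U^q - V^q = (U - V)\sum_{\ell=0}^{q-1} U^{\ell}V^{q-1-\ell}$ applied with $U = u^m$ and $V = v^m$ gives
\begin{equation*}
u^n - v^n = (u^m - v^m)\sum_{\ell=0}^{q-1} u^{m\ell} v^{m(q-1-\ell)}.
\end{equation*}
Thus, up to the factor $i$ and the factorials, $B_{n,2}(x)$ equals $B_{m,2}(x)$ times the cofactor $Q(x) \colonequal \sum_{\ell=0}^{q-1} u^{m\ell}v^{m(q-1-\ell)}$, and it remains to show $Q \in \mathbb{Q}[x]$.

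First I would record that $Q(x)$ is a symmetric function of $u$ and $v$: swapping $\ell \mapsto q-1-\ell$ in the summation fixes $Q$, so $Q$ is invariant under the involution $u \leftrightarrow v$, which is exactly complex conjugation since $u = x+i$ and $v = x-i = \bar u$ for real $x$. A polynomial in $u,v$ invariant under this swap is a polynomial in the symmetric functions $u+v = 2x$ and $uv = x^2+1$, both of which lie in $\mathbb{Q}[x]$. Hence $Q(x) \in \mathbb{Q}[x]$. Collecting constants, the ratio $B_{n,2}(x)/B_{m,2}(x)$ equals $\tfrac{m!}{n!}\,Q(x)$, a rational multiple of a polynomial in $\mathbb{Q}[x]$, which is again in $\mathbb{Q}[x]$. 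This establishes the divisibility.

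The main step requiring care is the passage from ``$Q$ is a conjugation-invariant polynomial in $u$ and $v$'' to ``$Q \in \mathbb{Q}[x]$.'' One must be sure the symmetrization argument is applied correctly: $Q$ is a priori an element of $\mathbb{C}[x]$, and the cleanest way to conclude is to pair up the terms $u^{m\ell}v^{m(q-1-\ell)}$ with $u^{m(q-1-\ell)}v^{m\ell}$, noting each such sum $u^A v^B + u^B v^A$ (with $A+B = m(q-1)$) is a real polynomial expressible via $u+v$ and $uv$; the middle term, when $q$ is odd, is $(uv)^{m(q-1)/2}=(x^2+1)^{m(q-1)/2}$, already in $\mathbb{Q}[x]$. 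I expect this bookkeeping to be the only genuine obstacle, and it is routine; everything else is the geometric-series factorization. An alternative, if one prefers to avoid the symmetric-function language, is to observe directly that $B_{n,2}$ and $B_{m,2}$ both have rational coefficients (visible from the binomial expansion~\eqref{form-B}), so their quotient in $\mathbb{C}(x)$ automatically has rational coefficients once it is shown to be a polynomial; the factorization above shows it is a polynomial, completing the proof.
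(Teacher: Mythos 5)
Your proof is correct and follows the same route as the paper, which simply invokes the closed form $B_{n,2}(x)=\tfrac{1}{i\,n!}\left[(x+i)^n-(x-i)^n\right]$ together with the divisibility of $a^n-b^n$ by $a^m-b^m$. The extra care you take in verifying that the cofactor lies in $\mathbb{Q}[x]$ is a detail the paper leaves implicit (it also follows at once from the fact that a quotient of polynomials in $\mathbb{Q}[x]$ that is a polynomial over $\mathbb{C}$ is already in $\mathbb{Q}[x]$), but the argument is essentially identical.
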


\begin{proof}
This follows directly from~\eqref{form-B} and 
the divisibility of $a^{n}-b^{n}$ by $a^{m}-b^{m}$.
\end{proof}

For odd $n$, the quotient of $B_{2n,2}(x)$ by $B_{n,2}(x)$ admits a simple expression. 

\begin{Prop}
Let $n \in \mathbb{N}$. Define 
\begin{equation}
B^{*}_{n,2}(x) = x^{\deg B_{n,2}} B_{n,2}(1/x).
\end{equation}
Then, for $n$ odd, 
\begin{equation}
\binom{2n}{n} B_{2n,2}(x) = (-1)^{\tfrac{n-1}{2}}x B_{n,2}(x) B^{*}_{n,2}(x).
\end{equation}
In particular, the sequence of coefficients in $B_{2n}(x)$ is palindromic.
\end{Prop}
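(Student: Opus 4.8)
The plan is to reduce everything to the complex closed forms $B_{n,2}(x)=\frac{1}{i\,n!}\bigl[(x+i)^n-(x-i)^n\bigr]$ and $C_{n,2}(x)=\frac{1}{2\,n!}\bigl[(x+i)^n+(x-i)^n\bigr]$ from Proposition~\ref{prop-easy}, where the stated identity should collapse to a sine double-angle relation. First I would compute the reversed polynomial $B^{*}_{n,2}$ in closed form. Since $\deg B_{n,2}=n-1$ and, for $n$ odd, $B_{n,2}(0)=2(-1)^{(n-1)/2}/n!\neq 0$, the reversal $B^{*}_{n,2}(x)=x^{n-1}B_{n,2}(1/x)$ genuinely has degree $n-1$. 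Using $1+ix=i(x-i)$ and $1-ix=-i(x+i)$ to rewrite $(1/x\pm i)^n=x^{-n}(1\pm ix)^n$, and keeping track of $i^n=(-1)^{(n-1)/2}i$ and $(-1)^n=-1$ for odd $n$, I expect to obtain the clean relation $x\,B^{*}_{n,2}(x)=2(-1)^{(n-1)/2}C_{n,2}(x)$.

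With this in hand, the right-hand side simplifies immediately: substituting into $(-1)^{(n-1)/2}x\,B_{n,2}(x)B^{*}_{n,2}(x)$ and using $(-1)^{n-1}=1$ for odd $n$ gives $2\,B_{n,2}(x)C_{n,2}(x)$. This last product is a difference of squares, $2B_{n,2}C_{n,2}=\frac{1}{i(n!)^2}\bigl[(x+i)^{2n}-(x-i)^{2n}\bigr]$, and comparing with $B_{2n,2}(x)=\frac{1}{i(2n)!}\bigl[(x+i)^{2n}-(x-i)^{2n}\bigr]$ yields exactly $\binom{2n}{n}B_{2n,2}(x)$, establishing the identity. Equivalently, via the trigonometric forms the whole computation is $\sin(2n\theta)=2\sin(n\theta)\cos(n\theta)$ with $\theta=\arccot x$.

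For the palindromic assertion I would argue through the factorization just established. By construction $B^{*}_{n,2}$ is the coefficient-reversal of $B_{n,2}$, so the product $P:=B_{n,2}B^{*}_{n,2}$ satisfies $P^{*}=P$, that is, $P$ is self-reciprocal of degree $2n-2$; here I would invoke $(fg)^{*}=f^{*}g^{*}$ together with the involution property of reversal, valid because $B_{n,2}(0)\neq 0$. For $n$ odd the exponents $n-2j-1$ occurring in $B_{n,2}$ are all even, so $B_{n,2}$, $B^{*}_{n,2}$, and hence $P$ involve only even powers of $x$; writing $P(x)=\sum_{i=0}^{n-1}p_i x^{2i}$, self-reciprocity means $p_i=p_{n-1-i}$. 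Since $\binom{2n}{n}B_{2n,2}(x)=(-1)^{(n-1)/2}x\,P(x)=(-1)^{(n-1)/2}\sum_{i} p_i x^{2i+1}$, the coefficients of the odd-degree terms actually present in $B_{2n,2}$ form precisely the palindromic sequence $(p_0,\dots,p_{n-1})$.

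The main obstacle is conceptual rather than computational: the degree of $B_{2n,2}$ is odd ($2n-1$) while only odd powers appear, so \emph{palindromic} cannot mean self-reciprocity in the literal degree-$(2n-1)$ sense. The point requiring care is therefore the correct reading of the claim — that it refers to the compressed sequence of coefficients of the terms that are present — together with the bookkeeping of parities and powers of $i$ in the step $x\,B^{*}_{n,2}=2(-1)^{(n-1)/2}C_{n,2}$, where an error in the sign $(-1)^{(n-1)/2}$ would propagate through the entire argument.
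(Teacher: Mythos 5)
Your proposal is correct and follows essentially the same route as the paper: both compute $B^{*}_{n,2}$ in closed form via $1+ix=i(x-i)$, $1-ix=-i(x+i)$ to obtain $x\,B^{*}_{n,2}(x)=2(-1)^{(n-1)/2}C_{n,2}(x)$, and then the identity reduces to the difference of squares $(x+i)^{2n}-(x-i)^{2n}$, i.e.\ the double-angle formula. Your explicit treatment of the palindromic claim (reading it as symmetry of the compressed sequence of coefficients of the odd powers actually present) is a welcome clarification of a point the paper leaves implicit with ``the result now follows directly.''
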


\begin{proof}
The proof is elementary. Observe that 
\begin{align*}
	B_{n,2}^{*}(x)	&= \frac{x^{n-1}}{in!} \left[ \left( \frac{1}{x} + i \right)^{n} - \left( \frac{1}{x} - i \right)^{n} \right] \\
			&= \frac{1}{ix n!} \left[ (1 + ix)^{n} - (1-ix)^{n} \right] \\
			&= \frac{i^{n-1}}{n! x} \left[ (x-i)^{n} - (-1)^{n}(x+i)^{n} \right].
\end{align*}
It follows that 
\begin{equation}
	B_{n,2}^{*}(x) = \frac{(-1)^{\tfrac{n-1}{2}}}{xn!} \left[ (x+i)^{n} + (x-i)^{n} \right],
\end{equation}
and the result now follows directly.
\end{proof}

The explicit expression~\eqref{an2-type1} for the polynomial $A_{n,2}$ can be
written in terms of the polynomials
\begin{equation}
	\varphi_{m}(x) = (x+i)^{m} - (x-i)^{m}
\end{equation}
as
\begin{equation}
\label{an2-type2}
	A_{n,2}(x) = \frac{i}{n!} \left[ x \varphi_{n}(x) - \sum_{k=2}^{n} \frac{x^{k} \varphi_{n-k+1}(x)}{k(k-1)} \right].
\end{equation}
The polynomial $A_{n,2}$ is of degree $n$ and has rational coefficients. 

By analogy with the properties of denominators of $A_{n,1}(x)$ mentioned in Section~\ref{sec-case1} and discussed at greater length in~\cite{iter-loga1},
we now study the denominators $A_{n,2}(x)$ from an arithmetic point of view.  The first result is elementary.

\begin{Prop}
Let 
\begin{equation}
	\alpha_{n,2} \colonequal \text{denominator of $A_{n,2}(x)$}.
\label{den-an2}
\end{equation}
Then $\alpha_{n,2}$ divides $n! \, \lcm(1, 2, \dots, n)$.
\end{Prop}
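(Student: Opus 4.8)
The plan is to read the result off the closed form \eqref{an2-type2}, after clearing the obvious factor $1/n!$ and controlling the only genuinely arithmetic contributions, the numbers $1/(k(k-1))$. First I would record an integrality fact about the building blocks $\varphi_m(x)=(x+i)^m-(x-i)^m$: by Proposition~\ref{prop-easy}, formula \eqref{form-B}, we have $\varphi_m(x)/i=m!\,B_{m,2}(x)=2\sum_{j}(-1)^j\binom{m}{2j+1}x^{m-2j-1}$, so $\psi_m(x):=\varphi_m(x)/i$ is a polynomial with integer coefficients. Substituting $i\varphi_m(x)=-\psi_m(x)$ into \eqref{an2-type2} rewrites
\[
A_{n,2}(x)=\frac{1}{n!}\left[-x\,\psi_n(x)+\sum_{k=2}^{n}\frac{x^{k}\,\psi_{n-k+1}(x)}{k(k-1)}\right],
\]
which displays $A_{n,2}(x)$ as $1/n!$ times a $\mathbb{Q}[x]$-combination of the integer polynomials $\psi_m(x)$ in which the only non-integer rational coefficients are the $1/(k(k-1))$, for $2\le k\le n$.

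The crux of the argument is then purely number-theoretic: I claim that $k(k-1)$ divides $L_n:=\lcm(1,2,\dots,n)$ for every $k$ with $2\le k\le n$. I would prove this prime by prime. Fix a prime $p$ and write $v_p$ for the $p$-adic valuation. Since $\gcd(k,k-1)=1$, the prime $p$ divides at most one of $k$ and $k-1$, so $v_p(k(k-1))=v_p(j)$, where $j$ is whichever of $k,k-1$ is divisible by $p$ (and the valuation is $0$ if neither is). Because $j\le k\le n$, the prime power $p^{v_p(j)}$ divides $j$ and hence is at most $n$; therefore $v_p(k(k-1))=v_p(j)\le\lfloor\log_p n\rfloor=v_p(L_n)$. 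As this inequality holds for every prime $p$, we conclude $k(k-1)\mid L_n$.

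With this divisibility in hand the conclusion is immediate: each $1/(k(k-1))$ then lies in $\tfrac{1}{L_n}\mathbb{Z}$, so the bracketed polynomial has all coefficients in $\tfrac{1}{L_n}\mathbb{Z}$, and multiplying by $1/n!$ places every coefficient of $A_{n,2}(x)$ in $\tfrac{1}{n!\,L_n}\mathbb{Z}$. Hence the least common denominator $\alpha_{n,2}$ divides $n!\,\lcm(1,2,\dots,n)$, as claimed. The only step requiring real thought is the valuation estimate $k(k-1)\mid L_n$; once isolated it is short, but it is the reason the bound is stated with $\lcm(1,\dots,n)$ rather than with a larger quantity such as $\lcm\{k(k-1)\}$, and it is where I would expect a careless proof to go wrong, for instance by overlooking that a single prime power $p^a\le k\le n$ already sits inside $L_n$.
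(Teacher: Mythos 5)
Your proof is correct and follows the same route as the paper, which simply cites \eqref{an2-type2} together with the integrality of the $\varphi_m$ (more precisely of $\varphi_m/i$) and leaves the divisibility $k(k-1)\mid\lcm(1,\dots,n)$ implicit. You have merely filled in that last valuation step explicitly, which is a worthwhile but routine completion of the same argument.
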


\begin{proof}
The result follows from~\eqref{an2-type2} and the fact that the 
polynomials $\varphi_{m}(x)$ have integer coefficients. 
\end{proof}

As in~\eqref{beta-1}, it is useful to consider the ratio
\begin{equation}
	\beta_{n,2} \colonequal \frac{\alpha_{n,2}}{n \, \alpha_{n-1,2}}.
\end{equation}
Symbolic computations suggest the following.

\begin{Conj}
\label{conj-1}
The sequence $\beta_{n,2}$ is given by 
\begin{equation}
\label{betan2}
\beta_{n,2} =
\begin{cases} 
	p		& \text{if $n = p^r$ for some prime $p$ and $r \in \mathbb{N}$ and $n \neq 2 \cdot 3^{m} + 1$} \\
	\frac{1}{3}	& \text{if $n = 2 \cdot 3^{m}$ for some $m \in \mathbb{N}$} \\
	3p		& \text{if $n = 2 \cdot 3^{m} + 1$ and $n = p^{r}$ for some $m, r \in \mathbb{N}$} \\
	3		& \text{if $n = 2 \cdot 3^{m} + 1$ for some $m \in \mathbb{N}$ and $n \neq p^r$} \\
	1		& \text{otherwise.}
\end{cases}
\end{equation}
\end{Conj}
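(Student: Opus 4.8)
The starting point of my plan is an explicit formula for the coefficients of $A_{n,2}$. Writing the coefficient of $x^{n-2s}$ as $\tfrac{2(-1)^{s+1}}{n!}D_{n,s}$, one reads from \eqref{an2-type2} that
\[
 D_{n,s} = \binom{n}{2s+1} - \sum_{k=2}^{n-2s} \frac{\binom{n-k+1}{2s+1}}{k(k-1)}.
\]
Extracting the coefficient of $x^{n-2s-1}$ from the recurrence \eqref{recA} (simplified via Proposition~\ref{prop-easy}, using $(x+i)^{m}+(x-i)^{m}=2\,m!\,C_{m,2}(x)$ from \eqref{form-C}) gives the first-order recurrence $(n-2s)D_{n,s}=n\,D_{n-1,s}+\binom{n-1}{2s}$. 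Its homogeneous part telescopes through $\binom{n}{2s}=\prod_{k=2s+1}^{n}\tfrac{k}{k-2s}$, and the initial value $D_{2s,s}=0$ (coming from $A_{n,2}(0)=0$) integrates it to the closed form $D_{n,s}=\binom{n}{2s}\bigl(H_{n}-H_{2s}\bigr)$. Thus the coefficient of $x^{n-2s}$ in $A_{n,2}(x)$ equals $\tfrac{2(-1)^{s+1}}{n!}\binom{n}{2s}(H_{n}-H_{2s})$; setting $k=n-2s$ this is, up to the sign $(-1)^{s+1}$ and the factor $2$, exactly the coefficient $\tfrac{1}{n!}\binom{n}{k}(H_{n}-H_{n-k})$ of $A_{n,1}$ in \eqref{An-def}, but restricted to indices $k\equiv n\bmod 2$. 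This parallel is what I would exploit throughout.

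The second step reduces Conjecture~\ref{conj-1} to a purely $p$-adic statement. Fix a prime $p$, set $\ell_p(n)=v_p(\lcm(1,\dots,n))=\lfloor\log_p n\rfloor$, and write $\nu_p(n,k)=v_p\binom{n}{k}+v_p(H_n-H_{n-k})$. From the formula above,
\[
 v_p(\alpha_{n,2}) = v_p(n!) - v_p(2) - \mu_p(n), \qquad \mu_p(n)\colonequal \min_{\substack{1\le k\le n\\ k\equiv n\bmod 2}} \nu_p(n,k),
\]
while the evaluation $\alpha_{n,1}=n!\,\lcm(1,\dots,n)$ of \eqref{formula-alpha1} says precisely that the \emph{unrestricted} minimum $\min_{1\le k\le n}\nu_p(n,k)$ equals $-\ell_p(n)$. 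So the whole content of \eqref{betan2} is the assertion that the parity restriction does not raise this minimum, save in one family: $\mu_p(n)=-\ell_p(n)$ for all $n$, unless $p=3$ and $n=2\cdot 3^{m}$, in which case $\mu_3(n)=-\ell_3(n)+1$. Granting this, $\sigma_p(n)\colonequal v_p(\alpha_{n,2})-v_p(\alpha_{n,1})$ satisfies $\sigma_2\equiv-1$ for $n\ge2$, $\sigma_3(n)=-[\,n=2\cdot 3^{m}\,]$, and $\sigma_p\equiv0$ for $p\ge5$; since $v_p(\beta_{n,2})=[\,n=p^{r}\,]+\sigma_p(n)-\sigma_p(n-1)$, the difference telescopes into the five cases of \eqref{betan2} for $n\ge3$ (the $p=2$ shift switching on at $n=2$).

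The third step is the $\mu_p$-dichotomy. The bound $\mu_p(n)\ge-\ell_p(n)$ is immediate, since $v_p\binom{n}{k}\ge0$ and each $1/j$ with $j\le n$ has $v_p\ge-\ell_p(n)$. For the matching value I would exhibit an admissible $k\equiv n\bmod2$ with $\nu_p(n,k)=-\ell_p(n)$: a window $\{n-k+1,\dots,n\}$ containing a \emph{single} multiple of $p^{\ell_p(n)}$ (so that $v_p(H_n-H_{n-k})=-\ell_p(n)$ with no cancellation) and with $\binom{n}{k}$ a $p$-adic unit (so that, by Kummer's theorem, adding $k$ and $n-k$ in base $p$ produces no carries). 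For $p=2$ the choice $k=n$ already works, as $v_2(H_n)=-\ell_2(n)$; this shows the only effect at $p=2$ is the constant factor $v_2(2)=1$, i.e.\ $\sigma_2\equiv-1$. The exceptional family is forced arithmetically: for $p=3$ and $n=2\cdot 3^{m}$ the base-$3$ expansion of $n$ is $2\,0\cdots0$, so a unit binomial allows only $n-k\in\{0,3^{m},2\cdot3^{m}\}$; the isolating choice $n-k=3^{m}$ is barred because $3^{m}$ is odd (wrong parity), leaving $k=n$, where the two top terms combine as $3^{-m}(1+\tfrac12)=\tfrac32\,3^{-m}$ and, since $v_3(\tfrac32)=1$, raise the valuation to $v_3(H_{2\cdot3^m})=-\ell_3(n)+1$.

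The main obstacle is the \emph{uniform} construction of an isolating, parity-correct, carry-free window for every pair $(n,p)$ outside the family $p=3$, $n=2\cdot3^{m}$. Isolating one multiple of $p^{\ell_p(n)}$ pins the left endpoint of the window to an interval of length $p^{\ell_p(n)}$, inside which one must find an endpoint of the correct parity whose complement $n-k$ is digitwise $\le n$ in base $p$; and one must simultaneously exclude \emph{accidental} higher cancellation, i.e.\ guarantee that no Wolstenholme-type congruence lifts $v_p(H_n-H_{n-k})$ above $-\ell_p(n)$. It is this coupling of additive base-$p$ digit combinatorics with the erratic behaviour of partial harmonic sums modulo $p$ that resists a clean argument, and is, I expect, why the statement is recorded as a conjecture; a complete proof would most plausibly proceed by a careful case analysis on the base-$p$ digits of $n$, controlling the carry pattern and the relevant harmonic sum modulo $p$ in tandem.
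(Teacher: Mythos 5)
First, a point of orientation: the statement you are proving is labeled a \emph{Conjecture} in the paper, and the paper does not prove it. What the paper does is (i) show that Conjecture~\ref{conj-2} implies Conjecture~\ref{conj-1} by a telescoping comparison of prime valuations, (ii) reformulate Conjecture~\ref{conj-2} as Conjecture~\ref{conj-3} about the denominators $\gamma_n$ of the coefficients $h_{n,k}$, (iii) prove the $3$-adic part exactly (Theorem~\ref{3-adic}) and note the $2$-adic part, and (iv) explicitly record that what remains is the identity $\nu_p(\gamma_n)=\nu_p(\lcm(1,\dots,n))$ for all primes $p>3$. Your proposal is, in substance, the same reduction: your $\mu_p(n)=-\ell_p(n)$ claim for $p\geq 5$ is precisely the paper's open statement, your $p=2$ argument matches the paper's Note, and your analysis of the exceptional family $p=3$, $n=2\cdot 3^m$ (unit binomials force $n-k\in\{0,3^m,2\cdot3^m\}$, parity kills $3^m$, and $\tfrac{1}{3^m}+\tfrac{1}{2\cdot3^m}=\tfrac{1}{2\cdot3^{m-1}}$ lifts the valuation by one) is a compressed version of Steps~1--5 of Case~1 and the Kummer argument of Case~2 in the proof of Theorem~\ref{3-adic}. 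So you have not proved the conjecture, and to your credit you say so: the ``uniform isolating, parity-correct, carry-free window'' you identify as the obstacle is exactly the gap the authors leave open.

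One element of your write-up is genuinely better than what appears in the paper and is worth keeping: the closed form for the coefficients. Starting from~\eqref{expression} rather than~\eqref{an2-type2}, the coefficient of $x^{n-2s}$ in $A_{n,2}(x)$ is $-\tfrac{2(-1)^{s}}{n!}\sum_{k=1}^{n-2s}\tfrac{1}{k}\binom{n-k}{2s}$, and the classical identity $\sum_{k\geq 1}\tfrac{1}{k}\binom{n-k}{m}=\binom{n}{m}\left(H_n-H_m\right)$ gives your
\begin{equation*}
D_{n,s}=\binom{n}{2s}\left(H_n-H_{2s}\right),\qquad\text{equivalently}\qquad h_{n,s}=\binom{n}{2s+1}-\binom{n}{2s}\left(H_n-H_{2s}\right),
\end{equation*}
which checks against $h_{n,0}=n-H_n$ and against the listed values of $f_1,f_2,f_3$. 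This makes the parallel with~\eqref{An-def} exact --- the coefficients of $A_{n,2}$ are, up to sign and a factor of $2$, the coefficients of $A_{n,1}$ restricted to $k\equiv n\bmod 2$ --- and it cleanly explains why~\eqref{formula-alpha1} gives the \emph{unrestricted} minimum $-\ell_p(n)$ while the conjecture is the assertion that the parity restriction costs nothing except at $p=3$, $n=2\cdot3^m$. (Two small cautions: your derivation of the recurrence $(n-2s)D_{n,s}=nD_{n-1,s}+\binom{n-1}{2s}$ from~\eqref{recA} is asserted rather than carried out, though the closed form itself is verifiable directly; and your valuation formula $v_p(\alpha_{n,2})=v_p(n!)-v_p(2)-\mu_p(n)$ silently assumes the right-hand side is nonnegative, which needs a word for primes $p>n$.) But none of this closes the conjecture: the coupling of the base-$p$ carry condition with possible extra cancellation in $H_n-H_{n-k}$ for $p\geq5$ remains unaddressed in both your plan and the paper.
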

 
The formulation of this conjecture directly in terms of the denominators 
of $A_{n,2}(x)$ is as follows.

\begin{Conj}
\label{conj-2}
The denominator $\alpha_{n,2}$ of $A_{n,2}(x)$ is given by 
\begin{equation}
\label{alphan2}
\alpha_{n,2} =
\begin{cases}
	1				& \quad \text{if $n = 1$} \\
	n! \, \lcm(1, 2, \dots, n)/6	& \quad \text{if $n = 2 \cdot 3^{m}$ for some $m \geq 1$} \\
	n! \, \lcm(1, 2, \dots, n)/2	& \quad \text{otherwise.}
\end{cases}
\end{equation}
\end{Conj}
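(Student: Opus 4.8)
The plan is to reduce Conjecture~\ref{conj-2} to a prime-by-prime computation of the $p$-adic valuation of $\alpha_{n,2}$, carried out directly from the coefficients of $A_{n,2}$ supplied by~\eqref{an2-type2}. Writing $\widetilde{\varphi}_m(x) \colonequal \varphi_m(x)/(2i)$, which by~\eqref{form-B} is the integer polynomial $\sum_{2j+1\le m}(-1)^j\binom{m}{2j+1}x^{m-2j-1}$, formula~\eqref{an2-type2} becomes $A_{n,2}(x) = \frac{2}{n!}\bigl[-x\widetilde{\varphi}_n(x)+\sum_{k=2}^n \frac{x^k\widetilde{\varphi}_{n-k+1}(x)}{k(k-1)}\bigr]$. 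Extracting the coefficient of $x^a$ (which vanishes unless $a\equiv n\pmod 2$) and setting $s\colonequal n-a+1$, an \emph{odd} integer in $\{1,\dots,n\}$, one finds
\[
 [x^a]A_{n,2}(x) = \frac{2(-1)^{(s-1)/2}}{n!}\,T_a, \qquad T_a\colonequal \sum_{i=1}^{a-1}\frac{\binom{n-i}{s}}{i(i+1)}-\binom{n}{s}.
\]
Since $\alpha_{n,2}$ is the least common denominator of these coefficients, for each prime $p$ one gets $v_p(\alpha_{n,2}) = v_p(n!) - \min_a v_p(T_a)$ for odd $p$, and $v_2(\alpha_{n,2}) = v_2(n!) - 1 - \min_a v_2(T_a)$. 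Thus the conjecture is equivalent to the single assertion $\min_a v_p(T_a) = -\lfloor\log_p n\rfloor$ for all $p$ and all $n\ge 2$, with the lone exception that $\min_a v_3(T_a) = -\lfloor\log_3 n\rfloor + 1$ precisely when $n = 2\cdot 3^m$.

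Next I would dispose of the easy inequality, the ``upper bound on denominators''. For $1\le i\le a-1\le n-1$ the consecutive integers $i$ and $i+1$ both divide $\lcm(1,\dots,n)$, so $i(i+1)\mid\lcm(1,\dots,n)$; hence the denominator of $T_a$ divides $\lcm(1,\dots,n)$ and $v_p(T_a)\ge -\lfloor\log_p n\rfloor$ for every $p$ and every $a$. This reproves the elementary divisibility $\alpha_{n,2}\mid n!\,\lcm(1,\dots,n)$ and, together with the global numerator factor $2$, already yields the generic upper bounds $v_p(\alpha_{n,2})\le v_p(n!)+\lfloor\log_p n\rfloor$ for odd $p$ and $v_2(\alpha_{n,2})\le v_2(n!)+\lfloor\log_2 n\rfloor - 1$. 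Everything then reduces to the lower bound: showing these bounds are attained.

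The heart of the argument is this attainment, analysed $p$-adically. Put $e\colonequal \lfloor\log_p n\rfloor$. Multiplying $T_a$ by $p^e$ and reducing modulo $p$ kills the term $\binom{n}{s}$ and every term with $v_p(i(i+1))<e$, while no term has $v_p(i(i+1))>e$ because $p^{e+1}>n$. Writing $u_i\colonequal i(i+1)/p^e$, a $p$-unit, for the surviving indices,
\[
 p^e\,T_a \equiv \sum_{\substack{1\le i\le a-1\\ p^e\mid i(i+1)}} u_i^{-1}\binom{n-i}{s}\pmod p .
\]
Attainment at level $e$ amounts to exhibiting an odd $s$ making this residue nonzero; the residues $\binom{n-i}{s}\bmod p$ are accessible through Lucas' theorem, and the admissible $i$ are those with $i$ or $i+1$ a multiple of $p^e$. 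The restriction that $s$ be odd (forced by $a\equiv n\pmod2$) is exactly what can obstruct attainment, and I expect it to single out $p=3$, $n=2\cdot3^m$, whose base-$3$ digits are a single $2$ followed by $m$ zeros: here the surviving terms should cancel modulo $3$ for every odd $s$, while the nonvanishing persists one level lower.

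The step I expect to be the main obstacle is precisely this delicate determination of when the modular sum vanishes. Proving nonvanishing for all $p\ge 5$ and for $p=2$ (so the generic value $n!\,\lcm(1,\dots,n)/2$ holds for $n\ge 2$), and then proving both the vanishing at level $e$ and the nonvanishing at level $e-1$ \emph{exactly} for the family $p=3$, $n=2\cdot3^m$, requires controlling a sum of binomial residues under the parity constraint on $s$. A plausible alternative is to prove Conjecture~\ref{conj-1} for the ratios $\beta_{n,2}$, localising at the single index where $n$ crosses a prime power; this isolates the ``jump'' but rests on the same binomial-residue computation, so I do not expect it to circumvent the core difficulty. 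The remaining exceptional values, namely $n=1$ and the $3p$ and $3$ cases of~\eqref{betan2} at $n=2\cdot3^m+1$, are then bookkeeping consequences of the telescoping $\alpha_{n,2}=n!\prod_{j=2}^n\beta_{j,2}$.
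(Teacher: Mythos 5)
You should note at the outset that the paper itself does not prove this statement: it is stated as a conjecture, the paper establishes only its $2$-adic and $3$-adic parts (the Note following Theorem~\ref{3-adic} and Theorem~\ref{3-adic} itself), and it explicitly reduces the rest to the open identity $\nu_{p}(\gamma_{n})=\nu_{p}(\lcm(1,\dots,n))$ for primes $p>3$. Your reduction is correct and is in substance the paper's own: your $T_{a}$ coincides, up to the reindexing $s=2k+1$, $i=\ell$ and up to subtracting the integer $\binom{n}{s}$ (which cannot change a denominator), with the coefficients $h_{n,k}=\sum_{\ell}\frac{1}{\ell(\ell+1)}\binom{n-\ell}{2k+1}$ isolated in Section~\ref{sec-arith}; your prime-by-prime reformulation is Conjecture~\ref{conj-3}; your upper bound via $i(i+1)\mid\lcm(1,\dots,n)$ is the paper's elementary proposition that $\alpha_{n,2}$ divides $n!\,\lcm(1,\dots,n)$; and your plan of multiplying by $p^{e}$, reducing modulo $p$, and applying Kummer/Lucas to the surviving indices with $p^{e}\mid i(i+1)$ is exactly how the paper proves the $p=3$ case.

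The genuine gap is the step you flag yourself and then leave open: proving that for every prime $p\ge 5$ and every $n\ge 2$ there exists an odd $s$ with $\sum_{p^{e}\mid i(i+1)} u_{i}^{-1}\binom{n-i}{s}\not\equiv 0\pmod p$, where $e=\lfloor\log_{p}n\rfloor$. This is not bookkeeping. What makes the paper's $p=3$ argument work is that at most two or three indices $i$ survive (namely $i=3^{e}-1$, $i=3^{e}$, and possibly $i=2\cdot 3^{e}-1$), so the sum can be analyzed term by term with Kummer's theorem; for general $p$ there can be up to $2(p-1)$ surviving indices, the unit weights $u_{i}^{-1}$ enter nontrivially, and the parity constraint on $s$ (forced by $a\equiv n\pmod 2$) interacts with the base-$p$ digits of $n$ in a way that Kummer's theorem alone does not control. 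No argument is given for why this sum is nonzero for some admissible $s$, nor for why the vanishing at level $e$ together with nonvanishing at level $e-1$ occurs \emph{only} for $p=3$ and $n=2\cdot 3^{m}$. As it stands, your proposal reproduces the paper's partial results and its correct framing of the problem, but it does not prove the conjecture; the missing lemma is precisely the content that keeps the statement conjectural in the paper.
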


This conjecture shows that the cancellations produced by the polynomials 
$\varphi_{m}(x)$ in~\eqref{an2-type2} have an arithmetical nature.

\begin{proof}[Proof that Conjecture~\ref{conj-2} implies Conjecture~\ref{conj-1}]
Assume that~\eqref{alphan2} holds for $n \geq 1$.  If $n = 2 \cdot 3^m$, then $\alpha_{n,2}$ contains one fewer power of $3$ than $n \, \alpha_{n-1,2}$.  If $n = 2 \cdot 3^m + 1$, then $\alpha_{n,2}$ contains one more power of $3$ than $n \, \alpha_{n-1,2}$.  If $n = p^r$ is a prime power, then $\alpha_{n,2}$ contains one more power of $p$ than $n \, \alpha_{n-1,2}$.  Otherwise each prime appears the same number of times in $\alpha_{n,2}$ and $n \, \alpha_{n-1,2}$.
\end{proof}

The first reduction is obtained by expanding the inner sum in~\eqref{an2-type2}.  Define
\begin{equation}
	G_{n}(x) = -2 i \sum_{k=0}^{\lfloor{n/2 \rfloor} - 1} (-1)^k \left[ \sum_{j=2k+1}^{n-1} \frac{1}{(n-j)(n-j+1)} \binom{j}{2k+1} \right] x^{n-2k}. 
\end{equation}

\begin{Prop}
We have
\begin{equation}
	A_{n,2}(x) = \frac{i}{n!} \left[ x \left( (x+i)^{n} - (x-i)^{n} \right) + G_{n}(x) \right].
\end{equation}
\end{Prop}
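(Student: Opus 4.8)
The plan is to start from the expression~\eqref{an2-type2} for $A_{n,2}(x)$ and match its two pieces against the two terms in the claim. Since $\varphi_{n}(x) = (x+i)^{n} - (x-i)^{n}$ by definition, the leading piece $\tfrac{i}{n!}\, x\varphi_{n}(x)$ already coincides with $\tfrac{i}{n!}\, x\left((x+i)^{n} - (x-i)^{n}\right)$. Hence the Proposition reduces to the single polynomial identity
\[
	- \sum_{k=2}^{n} \frac{x^{k}\varphi_{n-k+1}(x)}{k(k-1)} = G_{n}(x),
\]
after which multiplication by $i/n!$ gives the statement. I would prove this by expanding both sides into monomials in $x$ and comparing coefficients.

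To carry this out, first expand $\varphi_{m}(x)$ by the binomial theorem: writing $(x\pm i)^{m} = \sum_{l}\binom{m}{l} x^{m-l}(\pm i)^{l}$ and subtracting, the even powers of $i$ cancel while the odd ones double, and using $i^{2s+1} = i(-1)^{s}$ one obtains
\[
	\varphi_{m}(x) = 2i \sum_{s\ge 0} (-1)^{s} \binom{m}{2s+1} x^{m-2s-1},
\]
where the sum is effectively finite since $\binom{m}{2s+1}$ vanishes once $2s+1>m$. Substituting $m = n-k+1$ and multiplying by $x^{k}$ consolidates the power of $x$,
\[
	x^{k}\varphi_{n-k+1}(x) = 2i \sum_{s\ge 0} (-1)^{s} \binom{n-k+1}{2s+1} x^{n-2s}.
\]

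With this in hand, the remaining work is purely a matter of reindexing and tracking the summation ranges. Interchanging the order of summation to collect the coefficient of each monomial $x^{n-2s}$ gives
\[
	- \sum_{k=2}^{n} \frac{x^{k}\varphi_{n-k+1}(x)}{k(k-1)} = -2i \sum_{s\ge 0} (-1)^{s} x^{n-2s} \sum_{k=2}^{n} \frac{1}{k(k-1)} \binom{n-k+1}{2s+1}.
\]
The outer index $s$ runs only up to $\lfloor n/2\rfloor-1$: because the sum over $k$ begins at $k=2$, the binomial coefficient forces $2s+1 \le n-k+1 \le n-1$, whence $s \le \lfloor n/2\rfloor - 1$, exactly the range of the outer index in $G_{n}(x)$. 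I would then transform the inner sum by $j = n-k+1$, under which $k(k-1) = (n-j+1)(n-j)$ and the range $2 \le k \le n$ becomes $1 \le j \le n-1$; since $\binom{j}{2s+1}$ vanishes for $j<2s+1$, the effective lower limit is $j = 2s+1$. This produces precisely the inner sum $\sum_{j=2s+1}^{n-1}\frac{1}{(n-j)(n-j+1)}\binom{j}{2s+1}$ of $G_{n}(x)$, with $s$ in the role of its summation variable. I expect no genuine analytic obstacle here, as the identity is an exact algebraic rearrangement; the only point demanding care is this alignment of ranges, namely that separating out the $a_{1}=-1$ term as $x\varphi_{n}(x)$ (equivalently, starting the sum at $k=2$) is exactly what caps the outer index at $\lfloor n/2\rfloor - 1$. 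Comparing coefficients of each $x^{n-2s}$ then finishes the proof.
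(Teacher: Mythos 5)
Your proposal is correct and follows essentially the same route as the paper: both start from the representation \eqref{an2-type2}, expand $\varphi_{n-k+1}(x)$ by the binomial theorem so that only the odd-index terms (odd powers of $i$) survive, and then reindex via $j = n-k+1$ and interchange the order of summation to read off $G_{n}(x)$. Your careful tracking of why the outer index stops at $\lfloor n/2\rfloor - 1$ is a welcome detail that the paper leaves implicit, but the argument is the same calculation.
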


\begin{proof}
Expanding the terms $(x+i)^{n-k+1}$ and $(x-i)^{n-k+1}$ in the expression for 
$A_{n,2}(x)$ yields the sum
\begin{equation}
\sum_{j=1}^{n-1} \frac{x^{n+1-j}}{(n-j)(n-j+1)} 
\sum_{k=0}^{j} \binom{j}{k} x^{j-k} i^{k} \left( (-1)^k - 1 \right) 
\end{equation}
so only odd $k$ contribute to it. Reversing the order of summation 
gives the result.
\end{proof}

The next result compares the denominator $\alpha_{n,2}$ of $A_{n,2}(x)$ and 
the denominator of $G_{n}$, denoted by $\gamma_{n}$.

\begin{Cor}
For $n \geq 1$, the denominators $\alpha_{n,2}$ and $\gamma_{n}$ satisfy
\begin{equation}
\alpha_{n,2} = n! \, \gamma_{n}.
\end{equation}
\end{Cor}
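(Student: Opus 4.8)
The plan is to start from the preceding Proposition, which writes
\[
n!\,A_{n,2}(x) = i\,x\big((x+i)^{n}-(x-i)^{n}\big) + i\,G_{n}(x).
\]
First I would check that the first summand lies in $\mathbb{Z}[x]$. From the expansion $(x+i)^{n}-(x-i)^{n} = 2i\sum_{j}(-1)^{j}\binom{n}{2j+1}x^{n-2j-1}$ one gets $i\,x\big((x+i)^{n}-(x-i)^{n}\big) = -2\sum_{j}(-1)^{j}\binom{n}{2j+1}x^{n-2j}$, a polynomial with integer coefficients. Hence $n!\,A_{n,2}(x)$ and $i\,G_{n}(x)$ differ by an integer polynomial. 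Since $i$ is a unit and adding an integer leaves the reduced denominator of a rational number unchanged, the least common denominator of the coefficients of $n!\,A_{n,2}(x)$ equals $\gamma_{n}$.

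Next I would convert this into a statement about valuations. For a prime $q$ let $v_{q}(\cdot)$ denote the $q$-adic valuation of the least common denominator of a polynomial's coefficients. The per-coefficient relation $v_{q}\big(\mathrm{den}(n!\,c)\big)=\max\big(0,\,v_{q}(\mathrm{den}\,c)-v_{q}(n!)\big)$, maximized over the coefficients of $A_{n,2}$, yields
\[
v_{q}(\gamma_{n}) = \max\big(0,\ v_{q}(\alpha_{n,2}) - v_{q}(n!)\big).
\]
At every prime with $v_{q}(\alpha_{n,2})\ge v_{q}(n!)$ this rearranges to $v_{q}(\alpha_{n,2}) = v_{q}(n!)+v_{q}(\gamma_{n})$, which is exactly the asserted equality $\alpha_{n,2}=n!\,\gamma_{n}$ at $q$.

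The main obstacle is therefore to establish that $n!\mid\alpha_{n,2}$, that is, $v_{q}(\alpha_{n,2})\ge v_{q}(n!)$ for every prime $q$; without it the clipping by $\max(0,\cdot)$ destroys the equality, since a factor common to $n!$ and $\gamma_{n}$ could otherwise cancel. Because each coefficient of $n!\,A_{n,2}$ is an integer plus a coefficient of $i\,G_{n}$, this reduces to exhibiting, for each prime $q\le n$, one coefficient of $n!\,A_{n,2}(x)$ whose reduced numerator is coprime to $q$: if the numerator of some coefficient $a/b$ of $n!\,A_{n,2}$ is a $q$-adic unit, then the matching coefficient of $A_{n,2}$ has denominator divisible by $q^{\,v_{q}(n!)}$.

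The leading coefficient handles almost all primes. A short telescoping computation from~\eqref{an2-type2} shows that the coefficient of $x^{n}$ in $A_{n,2}$ is $-2H_{n}/n!$, so the top coefficient of $n!\,A_{n,2}$ is $-2H_{n}=-2N_{n}/D_{n}$; its reduced numerator is coprime to $q=2$ (as $D_{n}$ is even for $n\ge 2$) and to every odd prime $q\nmid N_{n}$. The genuinely delicate case is the finite set of odd primes $q\le n$ dividing the numerator $N_{n}$ of the harmonic number $H_{n}$ (for instance $q=3$ when $n=7$), where the top coefficient fails to witness the full power $q^{\,v_{q}(n!)}$. For these I would have to locate a lower-degree coefficient carrying that power, either by a direct $q$-adic analysis of the double sum defining $G_{n}$, or by an induction on $n$ through the recurrence $A_{n,2}'(x)=A_{n-1,2}(x)-\tfrac{1}{n!}\big[(x+i)^{n-1}+(x-i)^{n-1}\big]$. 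I expect this non-cancellation spread across the coefficients, rather than the denominator bookkeeping itself, to be the crux.
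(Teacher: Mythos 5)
The paper offers no proof of this Corollary at all: it is stated as an immediate consequence of the preceding Proposition, the intended argument being exactly your first paragraph ($n!\,A_{n,2}=P+iG_{n}$ with $P\in\mathbb{Z}[x]$, so the denominator of $n!\,A_{n,2}$ equals $\gamma_{n}$). Your second paragraph goes beyond the paper in a substantive way: you correctly observe that this only yields $v_{q}(\gamma_{n})=\max\bigl(0,\,v_{q}(\alpha_{n,2})-v_{q}(n!)\bigr)$, so the asserted equality $\alpha_{n,2}=n!\,\gamma_{n}$ is equivalent to the additional claim $n!\mid\alpha_{n,2}$, which is not automatic (individual coefficients do cancel against $n!$; e.g.\ the coefficient $\tfrac13 x$ in $A_{3,2}$ has denominator $3$, not $3!\,$). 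Your leading-coefficient computation $[x^{n}]A_{n,2}=-2H_{n}/n!$ is correct and does dispose of $q=2$ (since $D_{n}$ is even for $n\ge 2$) and of every odd prime not dividing $N_{n}$. In this respect your write-up is more careful than the source.

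That said, the proposal is not a complete proof, and you say so yourself: for odd primes $q\le n$ dividing $N_{n}$ you only indicate where a witness coefficient might be found. Two remarks on closing the gap. First, it is smaller than your framing suggests: whenever $v_{q}(\gamma_{n})\ge 1$, the relation $v_{q}(\gamma_{n})=\max\bigl(0,\,v_{q}(\alpha_{n,2})-v_{q}(n!)\bigr)$ already forces $v_{q}(\alpha_{n,2})=v_{q}(n!)+v_{q}(\gamma_{n})$ with no witness needed, so only primes $q\le n$ with $q\nmid\gamma_{n}$ are genuinely at issue (your example $q=3$, $n=7$ is in fact unproblematic by Theorem~\ref{3-adic}, since $7$ begins with $2$ in base $3$). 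Second, for odd $n$ the coefficient of $x$ in $n!\,A_{n,2}$ is $\pm 2$ (it comes entirely from $i\,x\varphi_{n}(x)$, as $G_{n}$ has no linear term), which witnesses $v_{q}(n!)$ for every odd prime at once; a similar low-degree coefficient computation for even $n$ gives $[x^{2}]\bigl(n!\,A_{n,2}\bigr)=\pm(2n-1)$, which still leaves the odd primes dividing $2n-1$ to be handled. So the crux you identify is real, your reduction of the Corollary to it is correct, but the proof is unfinished --- and, to be fair, the paper leaves the same step unjustified.
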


We now rephrase Conjecture~\ref{conj-2} as the following.

\begin{Conj}
\label{conj-3}
For $n \geq 2$, 
\begin{equation}
\gamma_{n} = 
\begin{cases}
	\lcm(1, 2, \dots, n)/6		& \text{if $n = 2 \cdot 3^{m}$ for some $m \geq 1$} \\
	\lcm(1, 2, \dots, n)/2		& \text{otherwise.}
\end{cases}
\end{equation}
\end{Conj}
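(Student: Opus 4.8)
The plan is to compute $v_{p}(\gamma_{n})$ one prime $p$ at a time, by deciding exactly which power of $p$ survives in the denominator of some coefficient of $G_{n}$. The coefficient of $x^{n-2k}$ in $G_{n}$ equals $-2i(-1)^{k}c_{k}$ with $c_{k}=\sum_{j=2k+1}^{n-1}\binom{j}{2k+1}/\bigl[(n-j)(n-j+1)\bigr]$, so the explicit factor $2$ removes one power of $2$ from the denominator while leaving odd primes untouched. The Proposition bounding $\alpha_{n,2}$ together with the Corollary $\alpha_{n,2}=n!\,\gamma_{n}$ gives $\gamma_{n}\mid L_{n}$, where $L_{n}=\lcm(1,\dots,n)$; hence $v_{p}(\gamma_{n})\le s$, where $p^{s}$ is the largest power of $p$ with $p^{s}\le n$. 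Everything reduces to deciding when this bound is attained, i.e.\ when $p^{s}$ occurs in the denominator of some admissible $c_{k}$.

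To test this I would reduce modulo $p$. Applying the partial fraction $1/[(n-j)(n-j+1)]=1/(n-j)-1/(n-j+1)$ and substituting $\ell=n-j$, the only terms that can carry a factor $p^{s}$ in their denominator are those with $\ell$ or $\ell+1$ divisible by $p^{s}$; writing such $\ell=mp^{s}$ and using $\binom{N}{r}-\binom{N+1}{r}=-\binom{N}{r-1}$ merges the two families into the single congruence
\[
 p^{s}c_{k}\equiv-\sum_{m=1}^{t}m^{-1}\binom{n-mp^{s}}{2k}\pmod p,\qquad t=\lfloor n/p^{s}\rfloor .
\]
Because $p^{s}\le n<p^{s+1}$ forces $t\le p-1$, every $m$ here is prime to $p$, so $m^{-1}$ makes sense mod $p$, and $p^{s}$ survives in $c_{k}$ exactly when the right-hand side is nonzero. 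Thus $v_{p}(\gamma_{n})=s$ if and only if some admissible $k$ makes this sum nonzero.

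Next I would apply Lucas's theorem. Writing $n=tp^{s}+n_{0}$ with $0\le n_{0}<p^{s}$, and $2k=K_{s}p^{s}+K'$ with $0\le K'<p^{s}$, Lucas gives $\binom{n-mp^{s}}{2k}\equiv\binom{t-m}{K_{s}}\binom{n_{0}}{K'}$, so the congruence factors as $p^{s}c_{k}\equiv-\binom{n_{0}}{K'}\,T(K_{s})$, where $T(K):=\sum_{m=1}^{t}m^{-1}\binom{t-m}{K}$. The generating identity $\sum_{K\ge0}T(K)y^{K}=\sum_{m=1}^{t}(1+y)^{t-m}/m$ shows that the top value $T(t-1)=1\nequiv0$ never vanishes, so choosing $K_{s}=t-1$ always produces a nonzero upper factor; it then remains to realize it by an admissible even index $2k$ whose base-$p$ digits do not exceed those of $n_{0}$, so that $\binom{n_{0}}{K'}\nequiv0$. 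Matching the parity of $K_{s}p^{s}$ with an allowable $K'$ is possible for every $p\ge3$ except when $n_{0}=0$ and $t$ is even, and in all of these non-exceptional cases one obtains $v_{p}(\gamma_{n})=s$. For $p=3$ the constraint $t\le2$ makes the exceptional condition "$t$ even and $n_{0}=0$" equivalent to $n=2\cdot3^{s}$, precisely the family singled out in Conjecture~\ref{conj-3}.

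The prime $p=2$ is handled separately and cleanly: the leading coefficient is controlled by $c_{0}=n-H_{n}$, and the classical valuation $v_{2}(H_{n})=-\lfloor\log_{2}n\rfloor$ shows that $2^{s}$ occurs in $c_{0}$, so after the drop caused by the factor $2$ one gets $v_{2}(\gamma_{n})=s-1$ for every $n$. Two harder points remain. First, for $n=2\cdot3^{m}$ one must show not only that $3^{s}$ cancels in every $c_{k}$ — which the mod-$3$ computation delivers, since there $T(0)=H_{2}\equiv0$ while the alternative $K_{s}=1$ is parity-forbidden — but also that $3^{s-1}$ genuinely survives, and this forces one to rerun the reduction at level $s-1$, where now $\lfloor n/3^{s-1}\rfloor\ge p$ and multiples of $p$ re-enter the sum. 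Second, for $p\ge5$ and the sporadic families $n=tp^{s}$ with $t$ even (where $K_{s}=t-1$ is parity-forbidden and $T(0)=H_{t}$ may itself vanish, as at $t=p-1$ by Wolstenholme) one must exhibit an \emph{even} digit $K_{s}<t$ with $T(K_{s})\nequiv0$, together with the bookkeeping that the resulting $k$ satisfies $0\le k\le\lfloor n/2\rfloor-1$. Proving this nonvanishing of an even-indexed $T(K_{s})$ — equivalently, that the even part of $\sum_{m=1}^{t}(1+y)^{t-m}/m$ is not identically zero mod $p$ in degrees below $t$ — is the main obstacle.
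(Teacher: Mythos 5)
The statement you are asked to prove is labeled a \emph{Conjecture} in the paper, and the paper does not prove it: it establishes only the $2$-adic valuation of $\gamma_n$ (in a Note, via $h_{n,0}=n-H_n$ and $\nu_2(D_n)=\lfloor\log_2 n\rfloor$) and the $3$-adic valuation (Theorem~\ref{3-adic}, via a case analysis using Kummer's theorem), and then explicitly records that the conjecture is \emph{reduced to} showing $\nu_p(\gamma_n)=\nu_p(\lcm(1,\dots,n))$ for all primes $p>3$. Your proposal has the same status: it is not a proof. You say so yourself in the final paragraph, where the nonvanishing mod $p$ of an even-indexed $T(K_s)$ for $p\ge 5$ and $n=tp^s$ with $t$ even is identified as ``the main obstacle'' and left open, and where the survival of $3^{s-1}$ for $n=2\cdot 3^m$ is deferred to a rerun of the reduction at level $s-1$ that is never carried out. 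Since the conjecture is precisely about \emph{all} primes up to $n$, an argument that settles only the generic case and flags the exceptional families as unresolved does not establish the statement.

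Within the part you do carry out, there is one concrete gap worth naming: the merging of the two families $\ell=mp^s$ and $\ell=mp^s-1$ via $\binom{N}{r}-\binom{N+1}{r}=-\binom{N}{r-1}$ silently assumes both indices lie in the summation range $1\le\ell\le n-1-2k$. At the top of the range one of the pair can fall outside (this is exactly what the paper's Steps 1--3 of Case 1 are policing, e.g.\ the unpaired term at $\ell=2\cdot 3^m-1$ for $k=0$), so the clean congruence $p^s c_k\equiv-\sum_m m^{-1}\binom{n-mp^s}{2k}\pmod p$ needs a boundary correction that your write-up omits. That said, your framework --- partial fractions, Lucas's theorem, and the generating function $\sum_K T(K)y^K=\sum_{m=1}^t(1+y)^{t-m}/m$ --- is a more systematic organization of the problem than the paper's ad hoc Kummer-based case analysis, it recovers the paper's proved cases $p=2,3$, and it isolates the genuinely hard residual question (nonvanishing of the even part of $\sum_{m=1}^t(1+y)^{t-m}/m$ modulo $p$) in a form that could plausibly be attacked; but as it stands it proves no more than the paper already does.
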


The next theorem establishes part of this conjecture, namely the exceptional role that the prime $p=3$ plays.  The proof employs the notation
\begin{equation}
	g_{n,k}(j) = \frac{1}{(n-j)(n-j+1)} \binom{j}{2k+1} 
\end{equation}
so that
\begin{equation}
	G_{n}(x) = -2i \sum_{k=0}^{\lfloor{ n/2 \rfloor}-1} (-1)^{k} h_{n,k}x^{n-2k}
\end{equation}
with 
\begin{equation}
	h_{n,k} \colonequal \sum_{j=2k+1}^{n-1} g_{n,k}(j) = 
\sum_{\ell=1}^{n-1-2k} \frac{1}{\ell(\ell+1)} \binom{n-\ell}{2k+1}.
\end{equation}
Therefore, for $n \geq 2$,
\begin{equation}
\label{form-gamma-n}
	\gamma_{n} = \frac{1}{2} \cdot \lcm \left\{ \text{denominator of $h_{n,k}$} : \, 0 \leq k \leq \lfloor n/2 \rfloor - 1 \right\}.
\end{equation}

Let $\nu_p(n)$ be the exponent of the highest power of $p$ dividing $n$ --- the \emph{$p$-adic valuation of $n$}.  The denominators in the terms forming the sum $h_{n,k}$ are consecutive 
integers bounded by $n$. Therefore 
\begin{equation}
	\nu_{3}(\gamma_{n}) \leq \nu_{3}(\lcm(1, 2, \dots, n)).
\label{bound-1}
\end{equation}
In fact we can establish $\nu_{3}(\gamma_{n})$ precisely.

\begin{Thm}
\label{3-adic}
The $3$-adic valuation of $\gamma_{n}$ is given by 
\[
\nu_{3}(\gamma_{n}) =
\begin{cases}
	\nu_{3}(\lcm(1, 2, \dots, n)) - 1	& \text{if $n = 2 \cdot 3^{m}$ for some $m \geq 1$} \\
	\nu_{3}(\lcm(1, 2, \dots, n))		& \text{otherwise.}
\end{cases}
\]
\end{Thm}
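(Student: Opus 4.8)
The plan is to compute $\min_{k}\nu_{3}(h_{n,k})$, since $\gamma_{n}=\tfrac12\lcm\{\text{denominator of }h_{n,k}\}$ and $\tfrac12$ is a $3$-adic unit give $\nu_{3}(\gamma_{n})=-\min_{k}\nu_{3}(h_{n,k})$. Write $v:=\nu_{3}(\lcm(1,\dots,n))$, so $3^{v}\le n<3^{v+1}$. By~\eqref{bound-1} every summand $\frac{1}{\ell(\ell+1)}\binom{n-\ell}{2k+1}$ of $h_{n,k}$ has $3$-adic valuation at least $-v$, because $\ell,\ell+1$ are consecutive. Hence $3^{v}h_{n,k}$ is a $3$-adic integer, and writing $r_{k}\equiv 3^{v}h_{n,k}\pmod 3$ I have $\nu_{3}(h_{n,k})=-v$ exactly when $r_{k}\not\equiv 0$, and $\nu_{3}(h_{n,k})\ge-(v-1)$ otherwise. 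The theorem thus reduces to deciding, for each $n$, whether some admissible $k$ has $r_{k}\not\equiv0$.

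First I would isolate the terms contributing to $r_{k}$. A summand reaches valuation $-v$ only if $\nu_{3}(\ell(\ell+1))=v$, i.e.\ $\ell$ or $\ell+1$ is one of the (at most two) multiples of $3^{v}$ in $[1,n]$, namely $3^{v}$ and, only when $n\ge 2\cdot3^{v}$, also $2\cdot3^{v}$. For such a \emph{critical} $\ell$ I expand the single surviving factor: if $\ell=c\cdot3^{v}$ the leading residue is $c^{-1}\binom{n-\ell}{2k+1}$, and if $\ell+1=c\cdot3^{v}$ it is $-c^{-1}\binom{n-\ell}{2k+1}$, all modulo $3$ (terms whose binomial is $\equiv0$ drop out, matching valuation $>-v$). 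Pairing the two neighbours of each multiple $c\cdot3^{v}$ and using $\binom{m}{s}-\binom{m+1}{s}=-\binom{m}{s-1}$ collapses the residues into
\[
  r_{k}\equiv\binom{b}{2k}-\binom{a}{2k}\pmod 3,\qquad a:=n-3^{v},\quad b:=n-2\cdot3^{v},
\]
with the convention $\binom{b}{\,\cdot\,}=0$ when $b<0$ (the case $n<2\cdot3^{v}$, no second multiple). Establishing this formula uniformly across the boundary cases is the main bookkeeping step.

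Next I would decide when $r_{k}\not\equiv0$ for some \emph{even} index $2k$. Here $a=b+3^{v}$ with $0\le b<3^{v}$, so in base $3$ the digits of $a$ are those of $b$ together with an extra $1$ in position $v$. By Lucas' theorem $\binom{a}{2k}\not\equiv\binom{b}{2k}$ forces the position-$v$ digit of $2k$ to equal $1$ and all other digits to lie below those of $b$; thus $2k=3^{v}+m'$ with $m'\le b$ digitwise, and such $2k$ is admissible since $2k\le a<n$. As every power $3^{j}$ is odd, $2k$ is even exactly when $m'$ has odd digit sum, and the digitwise constraints let that digit sum range over $\{0,1,\dots,\sum_{j}b_{j}\}$; an odd value is available \emph{iff} $b\ne0$. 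Hence a non-cancelling $k$ exists precisely when $b\ne0$, i.e.\ $n\ne2\cdot3^{m}$, giving $\nu_{3}(\gamma_{n})=v$ there, while for $n=2\cdot3^{m}$ every $r_{k}\equiv0$ and so $\nu_{3}(\gamma_{n})\le v-1$. This parity obstruction — an even index cannot be digitwise pinned to the odd number $3^{v}$ alone — is where the prime $3$ and the exceptional value $2\cdot3^{m}$ genuinely enter, and I expect it to be the crux of the argument.

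It remains to secure the matching lower bound $\nu_{3}(\gamma_{n})\ge v-1$ when $n=2\cdot3^{m}$. One might fear a delicate second-order ($3^{-(v-1)}$) expansion, but this is sidestepped by the closed form
\[
  h_{n,0}=\sum_{\ell=1}^{n-1}\frac{n-\ell}{\ell(\ell+1)}=n\,H_{n-1}-(n+1)(H_{n}-1)=n-H_{n},
\]
obtained from $\frac{n-\ell}{\ell(\ell+1)}=\frac{n}{\ell}-\frac{n+1}{\ell+1}$. For $n=2\cdot3^{m}$ one has $\nu_{3}(n)=m>-(m-1)$, so $\nu_{3}(h_{n,0})=\nu_{3}(H_{n})$; grouping $H_{n}$ by $\nu_{3}(j)$ shows the block $\nu_{3}(j)=m$ (here $j\in\{3^{m},2\cdot3^{m}\}$) contributes $3^{-m}\bigl(1+\tfrac12\bigr)=\tfrac12\,3^{-(m-1)}$, while the block $\nu_{3}(j)=m-1$ (here $j/3^{m-1}\in\{1,2,4,5\}$) contributes $1+\tfrac12+\tfrac14+\tfrac15\equiv0\pmod3$. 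Thus $3^{\,m-1}H_{n}\equiv\tfrac12\equiv2\pmod3$, whence $\nu_{3}(H_{n})=-(m-1)$ and $\nu_{3}(h_{n,0})=-(m-1)$. Combined with the upper bound this yields $\nu_{3}(\gamma_{n})=v-1$ exactly, completing the proof.
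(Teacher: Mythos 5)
Your proof is correct, and it takes a genuinely different and more unified route than the paper's. The paper argues in two separate cases according to whether $n=2\cdot3^{m}$: in the exceptional case it isolates the at most two terms of $h_{n,k}$ whose denominator is divisible by $3^{m}$ and disposes of them through five sub-cases of $k$, invoking Kummer's theorem on $\binom{3^{m}}{2k+1}$ and $\binom{3^{m}+1}{2k+1}$; in the remaining case it exhibits an explicit witness ($k=0$ when the leading base-$3$ digit of $n$ is $1$, and $k=\tfrac12(3^{r}+3^{\nu_{3}(n)})$ when it is $2$), again via Kummer. You instead pair the two critical indices $\ell$ adjacent to each multiple $c\cdot3^{v}$ and collapse them by Pascal's rule into the single leading-residue formula $r_{k}\equiv\binom{n-2\cdot3^{v}}{2k}-\binom{n-3^{v}}{2k}\pmod 3$, after which Lucas' theorem characterizes \emph{exactly} which admissible even $2k$ make this nonzero, namely $2k=3^{v}+m'$ with $m'$ digitwise at most $b=n-2\cdot3^{v}$ and odd, which exists if and only if $b\neq0$. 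This yields the upper bound in the exceptional case and the witnesses in all other cases in one stroke, isolates the conceptual reason that $3$ and $2\cdot3^{m}$ are special (the even index $2k$ cannot be digitwise pinned to the odd number $3^{v}$ alone), and shows that the paper's witness $3^{r}+3^{\nu_{3}(n)}$ is just one member of your family; your lower bound for $n=2\cdot3^{m}$ is the same $49/(20\cdot3^{m-1})$ computation as the paper's. Two small points to tidy up: the boundary bookkeeping you flag in deriving $r_{k}$ is indeed harmless because $\binom{n-\ell}{2k+1}=0$ for $\ell>n-1-2k$, so the sum over $\ell$ extends freely to the paired indices; and both the claim that there are at most two multiples of $3^{v}$ in $[1,n]$ and the conclusion that $b=0$ forces $\nu_{3}(\gamma_{n})\leq v-1$ require $v\geq1$, so the case $n=2$ (where $b=0$ yet the theorem asserts $\nu_{3}(\gamma_{2})=0$) must be checked separately, which is immediate since $\gamma_{2}=1$.
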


\begin{proof}
The analysis is divided into two cases. 

\smallskip

\noindent
{\bf Case 1}. Assume that $n = 2 \cdot 3^{m}$. We show that $\nu_{3}(\gamma_{n}) = m-1.$ \\

The bound~\eqref{bound-1} shows that $\nu_{3}(\gamma_{n}) \leq m$. 

\smallskip

\noindent
{\em Claim}: $\nu_{3}(\gamma_{n}) \neq m$. To prove this, the coefficient
\begin{equation}
h_{n,k} = \sum_{\ell=1}^{n-1-2k} \frac{1}{\ell(\ell+1)} \binom{n-\ell}{2k+1}
\end{equation}
is written as 
\begin{equation}
h_{n,k} = S_{1}(n,k) + S_{2}(n,k)
\end{equation}
where $S_{1}(n,k)$ is the sum of all the terms in $h_{n,k}$ with a 
denominator divisible by $3^{m}$ and $S_{2}(n,k)$ contains the remaining terms.
This is the highest possible power of $3$ that appears in the denominator of 
$h_{n,k}$. \\

It is now shown that the denominator of the sum $S_{1}(n,k)$ is never divisible 
by $3^{m}$.

\smallskip

\noindent
{\bf Step 1}. The sum $S_{1}(n,k)$ contains at most two 
terms.

\begin{proof}
The index $\ell$ satisfies $\ell \leq 2 \cdot 3^{m}-1-2k < 2 \cdot 3^{m}$. The 
only 
choices of $\ell$ that produce denominators divisible by $3^{m}$ are 
$\ell = 3^{m}, 3^{m}-1$ and $\ell=2 \cdot 3^{m}-1$. The term 
corresponding to this 
last choice is $\frac{1}{(2 \cdot 3^{m}-1) \cdot 2 \cdot 3^{m}} 
\binom{1}{2k+1}$, so it 
only occurs for $k=0$. In this situation, the term corresponding 
to $\ell=3^{m}$ is $1/(3^{m}+1)$ and it does not contribute to $S_{1}$.
\end{proof}

\noindent
{\bf Step 2}. If $\tfrac{1}{2}(3^{m}-1) < k \leq 3^m - 1$, then $S_{1}(n,k)$ is the empty sum. 
Therefore the denominator of $h_{n,k}$ is not divisible by $3^{m}$.

\begin{proof}
The index $\ell$ in the sum defining $h_{n,k}$ satisfies $1 \leq \ell \leq 
2 \cdot 3^{m}-1-2k$. The assumption on $k$ guarantees that 
neither $\ell = 3^{m}$
nor $\ell=3^{m}-1$ appear in this range.
\end{proof}

\noindent
{\bf Step 3}. If $k = \tfrac{1}{2}(3^{m}-1)$, then the denominator of $S_{1}(n,k)$ is not divisible by $3^{m}$.

\begin{proof}
In this case the sum $S_{1}(n,k)$ is
\[
	\frac{1}{3^m (3^m + 1)} + \frac{3^m + 1}{(3^m - 1) 3^m} = \frac{3^m + 3}{3^{2m} - 1}. \qedhere
\]
\end{proof}

\noindent
{\bf Step 4}. If $0 < k < \tfrac{1}{2}(3^{m}-1)$, then the denominator of 
$S_{1}(n,k)$ is not divisible by $3^{m}$.

\begin{proof}
The proof of this step employs a theorem of Kummer stating that $\nu_p(\binom{a}{b})$ is equal to the number of borrows involved in subtracting $b$ from $a$ in base $p$.
By Kummer's theorem, $\binom{3^m}{2 k + 1}$ and $\binom{3^m + 1}{2 k + 1}$ are divisible by $3$, so neither of the two terms in $S_{1}(n,k)$ has denominator divisible by $3^m$.
\end{proof}

\noindent
{\bf Step 5}. If $k = 0$, then the denominator of $S_{1}(n,k)$ is not divisible by $3^{m}$.

\begin{proof}
For $k=0$ we have
\begin{equation}
h_{n,0} = \sum_{\ell=1}^{n-1} \frac{n-\ell}{\ell(\ell+1)} = 
\sum_{\ell=1}^{n-1} \left( \frac{n - \ell}{\ell} - \frac{n - (\ell + 1)}
{\ell + 1} - \frac{1}{\ell + 1} \right) = n - H_{n},
\end{equation}
and the two terms in $H_n$ whose denominators are divisible by $3^m$ add up to 
\begin{equation}
\frac{1}{3^{m}} + \frac{1}{2 \cdot 3^{m}} = \frac{1}{2 \cdot 3^{m-1}}
\end{equation}
with denominator not divisible by $3^{m}$. 
\end{proof}

It follows that, for $n = 2 \cdot 3^m$, the denominator of the term $h_{n,k}$ is not divisible by 
$3^{m}$. Thus, $\nu_{3}(\gamma_{n}) \leq m-1$. 

\medskip

\noindent
{\em Claim}: $\nu_{3}(\gamma_{n}) \geq m-1$. This is established 
by checking that $3^{m-1}$
divides the denominator of $h_{n,0}$. Indeed, there are six terms in 
$h_{n,0} = n - H_{n}$ whose denominators are divisible by $3^{m-1}$, and 
their sum is 
\begin{equation}
\sum_{\ell=1}^{6} \frac{1}{\ell \cdot 3^{m-1}} = \frac{H_{6}}{3^{m-1}} = 
\frac{49}{20 \cdot 3^{m-1}}.
\end{equation}
Therefore $3^{m-1}$ divides the denominator of $h_{n,0}$. This completes Case 1.

\medskip

\noindent
{\bf Case 2}. Assume now that $n$ is not of the form $2 \cdot 3^{m}$. This 
states that the base $3$ representation of $n$ is not of 
the form $200 \cdots 00_3$.

Let $r = \lfloor{ \log_3 n \rfloor}$, so that $3^{r}$ is the 
largest power of $3$ less than or equal to $n$. We show that $\nu_{3}(\gamma_{n}) = r$
by exhibiting a value of the index $k$ so that the 
denominator of $h_{n,k}$ is divisible by $3^{r}$.

\smallskip

\noindent
{\bf Step 1}. Assume first that the base $3$ representation of $n$ begins with $1$.
Then choose $k=0$. As before, $h_{n,0} = n - H_{n}$. Observe that each 
term in the sum
\begin{equation}
	\lcm(1, 2, \dots, n) \cdot H_{n} = \sum_{\ell=1}^{n} 
\frac{\lcm(1, 2, \dots, n)}{\ell}
\end{equation}
is an integer. The condition on the base $3$ representation of $n$ guarantees 
that only one of these integers, namely the one corresponding to 
$\ell=3^{r}$, is
not divisible by $3$. Thus there is no extra cancellation of powers of $3$ in 
$H_{n}$, and as a result the denominator of $H_{n}$ is divisible by $3^{r}$. 

\smallskip

\noindent
{\bf Step 2}. Assume now that the base $3$ representation of $n$ begins with $2$.
Choose $k = \tfrac{1}{2}( 3^{r} + 3^{\nu_{3}(n)})$.
As in the discussion in Case~1, there are at most two terms in the sum
\begin{equation}
	h_{n,k} = \sum_{\ell=1}^{n-1-2k} \frac{1}{\ell(\ell+1)} \binom{n-\ell}{2k+1}
\end{equation}
with denominator divisible by $3^{r}$.
The sum of these terms is
\begin{equation}
	\frac{1}{3^{r}(3^{r}+1)} \binom{n-3^{r}}{2k+1} + \frac{1}{(3^{r}-1)3^{r}} \binom{n-3^{r}+1}{2k+1}.
\label{sum-bin2}
\end{equation}
If $n \equiv 2 \mod 3$, then Kummer's theorem shows that $3$ divides the second binomial coefficient but not the first;
otherwise, $3$ divides the first binomial coefficient but not the second.
Therefore $h_{n,k}$ has precisely one term with denominator divisible by $3^{r}$.
The argument is complete.
\end{proof}

\begin{Cor}
The $3$-adic valuation of the denominator $\alpha_{n,2}$ of $A_{n,2}(x)$ is
\[
\nu_{3}(\alpha_{n,2}) = 
\begin{cases}
	\nu_{3}(n! \, \lcm(1, 2, \dots, n)) - 1	& \text{if $n = 2 \cdot 3^{m}$ for some $m \geq 1$} \\
	\nu_{3}(n! \, \lcm(1, 2, \dots, n))	& \text{otherwise.}
\end{cases}
\]
\end{Cor}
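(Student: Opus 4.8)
The plan is to derive this statement as an immediate consequence of Theorem~\ref{3-adic} together with the relation $\alpha_{n,2} = n! \, \gamma_{n}$ established in the preceding corollary. First I would invoke the complete additivity of the $3$-adic valuation on products, which gives the two identities
\[
\nu_{3}(\alpha_{n,2}) = \nu_{3}(n!) + \nu_{3}(\gamma_{n})
\quad\text{and}\quad
\nu_{3}(n! \, \lcm(1, 2, \dots, n)) = \nu_{3}(n!) + \nu_{3}(\lcm(1, 2, \dots, n)).
\]
The whole statement then reduces to substituting the value of $\nu_{3}(\gamma_{n})$ supplied by Theorem~\ref{3-adic} and rearranging.

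Next I would split along the same dichotomy used in Theorem~\ref{3-adic}. In the generic case, where $n$ is not of the form $2 \cdot 3^{m}$, the theorem gives $\nu_{3}(\gamma_{n}) = \nu_{3}(\lcm(1, 2, \dots, n))$, so that $\nu_{3}(\alpha_{n,2}) = \nu_{3}(n!) + \nu_{3}(\lcm(1, 2, \dots, n)) = \nu_{3}(n! \, \lcm(1, 2, \dots, n))$, which is exactly the second branch of the claim. In the exceptional case $n = 2 \cdot 3^{m}$ with $m \geq 1$, the theorem contributes an extra $-1$, and since the $n!$ term is untouched by the case distinction the correction passes through verbatim: $\nu_{3}(\alpha_{n,2}) = \nu_{3}(n!) + \nu_{3}(\lcm(1, 2, \dots, n)) - 1 = \nu_{3}(n! \, \lcm(1, 2, \dots, n)) - 1$, which is the first branch. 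Comparing both computations with the asserted case distinction finishes the proof.

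There is essentially no obstacle here: all of the arithmetic content is already isolated in Theorem~\ref{3-adic}, and this corollary is only the bookkeeping that transports the valuation of $\gamma_{n}$ across the factor $n!$. The single point meriting a moment's care is the observation that $\nu_{3}(n!)$ enters both cases in precisely the same way, so the lone exceptional branch of $\nu_{3}(\gamma_{n})$ corresponds to the lone exceptional branch of $\nu_{3}(\alpha_{n,2})$ and no cases are created or merged in the passage.
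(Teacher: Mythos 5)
Your proposal is correct and is exactly the argument the paper intends (it leaves this corollary without an explicit proof, treating it as immediate from Theorem~\ref{3-adic} and the relation $\alpha_{n,2} = n!\,\gamma_{n}$). The additivity of $\nu_{3}$ and the case-by-case substitution you describe is precisely the implicit bookkeeping.
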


\begin{Note}
It easily follows that the $2$-adic valuation of $\gamma_{n}$ is 
\begin{equation}
\nu_{2}(\gamma_{n}) = \nu_{2}( \lcm(1, 2, \dots, n)) -1.
\label{form-gamma-n1}
\end{equation}
Indeed, from \eqref{form-gamma-n} one has
$\nu_{2}(\gamma_{n}) \leq  \nu_{2}( \lcm(1, 2, \dots, n)) -1$. On the other 
hand, the denominator of $h_{n,0} = n - H_{n}$ is divisible by the highest 
power of $2$, i.e., by $2^{\left\lfloor \log_{2}n \right\rfloor}$, which
implies \eqref{form-gamma-n1}. 

The proof of Conjecture~\ref{conj-2} has been reduced to the identity 
\begin{equation}
	\nu_{p}(\gamma_{n}) = \nu_{p}( \lcm(1, 2, \dots, n) )
\end{equation}
for all primes $p > 3$. 
\end{Note}

The sequence $2 \cdot 3^{m}$ appearing in the previous discussion also appears 
in relation with the denominators of the harmonic numbers $H_{n}$. As before, 
write 
\begin{equation}
	H_{n} = \frac{N_{n}}{D_{n}}
\end{equation}
in reduced form. The next result considers a special case of the quotient 
$D_{n-1}/D_{n}$ of denominators of consecutive harmonic numbers. The general 
case will be described elsewhere~\cite{moll-sun3}.

\begin{Thm}
Let $n \in \mathbb{N}$. Then $D_{2 \cdot 3^{n}-1} = 3 D_{2 \cdot 3^{n}}$. 
\end{Thm}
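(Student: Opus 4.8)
The plan is to work one prime at a time with the $p$-adic valuation $\nu_p$, extended multiplicatively from $\mathbb{Z}$ to $\mathbb{Q}$, recalling that for the reduced fraction $H_N = N_N/D_N$ one has $\nu_p(D_N) = \max\{0, -\nu_p(H_N)\}$. Writing $H_{2\cdot 3^n} = H_{2\cdot 3^n-1} + \tfrac{1}{2\cdot 3^n}$, the whole question reduces to understanding how the single term $\tfrac{1}{2\cdot 3^n}$ --- whose only negative valuations are $\nu_3 = -n$ and $\nu_2 = -1$ --- interacts with $H_{2\cdot 3^n}$ through the ultrametric inequality $\nu_p(x+y) \ge \min\{\nu_p(x), \nu_p(y)\}$, which is an equality whenever the two valuations differ. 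Concretely, the identity $D_{2\cdot 3^n-1} = 3\,D_{2\cdot 3^n}$ is equivalent to the two statements $\nu_3(D_{2\cdot 3^n-1}) = \nu_3(D_{2\cdot 3^n}) + 1$ and $\nu_p(D_{2\cdot 3^n-1}) = \nu_p(D_{2\cdot 3^n})$ for every prime $p \neq 3$.

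For $p = 3$ the essential input is already available inside the proof of Theorem~\ref{3-adic}: the computation there of $h_{N,0} = N - H_N$, specialized to $N = 2\cdot 3^n$ (Case~1, Step~5 together with its lower-bound claim), shows that the denominator of $2\cdot 3^n - H_{2\cdot 3^n}$ has $3$-adic valuation exactly $n-1$. Since $2\cdot 3^n$ is an integer and $H_{2\cdot 3^n}$ is in lowest terms, that denominator is precisely $D_{2\cdot 3^n}$, so $\nu_3(H_{2\cdot 3^n}) = -(n-1)$. As $\nu_3(\tfrac{1}{2\cdot 3^n}) = -n < -(n-1)$, the equality case of the ultrametric inequality gives $\nu_3(H_{2\cdot 3^n-1}) = \nu_3\!\left(H_{2\cdot 3^n} - \tfrac{1}{2\cdot 3^n}\right) = -n$, and hence $\nu_3(D_{2\cdot 3^n-1}) = n = \nu_3(D_{2\cdot 3^n}) + 1$. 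This accounts for the extra factor of $3$.

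For a prime $p \ge 5$ the term $\tfrac{1}{2\cdot 3^n}$ is a $p$-adic unit: if $\nu_p(H_{2\cdot 3^n}) < 0$ it lies strictly below $\nu_p(\tfrac{1}{2\cdot 3^n}) = 0$ and the ultrametric equality again yields $\nu_p(H_{2\cdot 3^n-1}) = \nu_p(H_{2\cdot 3^n})$, while if $\nu_p(H_{2\cdot 3^n}) \ge 0$ both harmonic numbers are $p$-adic integers and both denominators have trivial $p$-part; either way $\nu_p(D_{2\cdot 3^n-1}) = \nu_p(D_{2\cdot 3^n})$. For $p = 2$ I will invoke the classical fact (the Kürschák/Taeisinger argument) that $\nu_2(H_N) = -\lfloor \log_2 N\rfloor$, which comes from the uniqueness of the largest $2$-power in $\{1,\dots,N\}$; since $2\cdot 3^n$ is never a power of $2$ for $n \ge 1$, the value $\lfloor \log_2 N\rfloor$ agrees at $N = 2\cdot 3^n$ and $N = 2\cdot 3^n-1$, giving $\nu_2(D_{2\cdot 3^n-1}) = \nu_2(D_{2\cdot 3^n})$. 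Combining the three cases yields $D_{2\cdot 3^n-1}/D_{2\cdot 3^n} = 3$.

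The only substantive step is pinning down $\nu_3(D_{2\cdot 3^n}) = n-1$, and this is not fresh work: it is exactly the cancellation $\tfrac{1}{3^n} + \tfrac{1}{2\cdot 3^n} = \tfrac{1}{2\cdot 3^{n-1}}$ among the terms of $H_{2\cdot 3^n}$ of top $3$-power denominator, already made explicit in Theorem~\ref{3-adic} (where the residual unit $H_6 = 49/20$ appears). Once that valuation is fixed, the remaining primes demand no cancellation analysis whatsoever, so I expect no real obstacle beyond correctly citing the relevant portion of the earlier proof.
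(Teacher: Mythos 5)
Your proof is correct, and it reorganizes the argument in a way that is genuinely different from, and somewhat leaner than, the paper's. The paper puts $H_{2\cdot 3^n} = H_{2\cdot 3^n-1} + \tfrac{1}{2\cdot 3^n}$ over a common denominator, introduces the cancellation factor $w = \gcd\left(2\cdot 3^n N_{2\cdot 3^n-1} + D_{2\cdot 3^n-1},\, 2\cdot 3^n D_{2\cdot 3^n-1}\right)$, proves a lemma that $w = 2^{\alpha}3^{\beta}$, pins down $\alpha = 1$ by a parity analysis, and then determines $\beta$ by computing the $3$-adic valuations of \emph{both} $D_{2\cdot 3^n}$ and $D_{2\cdot 3^n-1}$ directly from the defining sums. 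You instead compare $\nu_p(D_{2\cdot 3^n-1})$ with $\nu_p(D_{2\cdot 3^n})$ one prime at a time via the ultrametric inequality. The two arguments share the single substantive arithmetic input --- the cancellation $\tfrac{1}{3^n}+\tfrac{1}{2\cdot 3^n}=\tfrac{1}{2\cdot 3^{n-1}}$ and the residual unit $H_6 = 49/20$, giving $\nu_3(H_{2\cdot 3^n}) = -(n-1)$, which you correctly extract from Case 1 of the proof of Theorem~\ref{3-adic} --- but your version then obtains $\nu_3(D_{2\cdot 3^n-1}) = n$ for free from the strict inequality of valuations rather than by the paper's separate direct computation, and it disposes of the primes $p \neq 3$ without the gcd lemma or the parity bookkeeping: for $p \geq 5$ the added term is a $p$-adic unit, and for $p = 2$ the classical fact $\nu_2(D_N) = \lfloor \log_2 N \rfloor$ (which the paper itself invokes at the start of its proof) settles the matter because $2\cdot 3^n$ is not a power of $2$. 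What the paper's route buys is self-containedness (everything is derived from the explicit fraction manipulation); what yours buys is brevity and a cleaner separation of the one nontrivial prime from the routine ones. The only point worth making explicit is that the statement requires $n \geq 1$ (it fails for $n=0$, where $D_1 = 1$ and $3D_2 = 6$), which your appeal to ``$2\cdot 3^n$ is never a power of $2$ for $n \geq 1$'' already presupposes.
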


\begin{proof}
An elementary argument shows that $\nu_{2}(D_{n}) = 
\lfloor{ \log_2 n \rfloor}$. Therefore $N_{n}$ is odd and $D_{n}$ is even. 

Observe that 
\begin{align}
	\frac{N_{2 \cdot 3^{n}}}{D_{2 \cdot 3^{n}}}
		&= \frac{N_{2 \cdot 3^{n}-1}}{D_{2 \cdot 3^{n}-1}} + \frac{1}{2 \cdot 3^{n}} \label{quot-1} \\
		&= \frac{2 \cdot 3^{n} N_{2 \cdot 3^{n}-1} + D_{2 \cdot 3^{n}-1}}{2 \cdot 3^{n} D_{2 \cdot 3^{n}-1}}. \nonumber
\end{align}
Therefore the denominator $D_{2 \cdot 3^{n}}$ is obtained from 
$2 \cdot 3^{n} D_{2 \cdot 3^{n} -1}$ by canceling the factor 
\begin{equation}
w = \gcd \left( 
2 \cdot 3^{n} \cdot N_{2 \cdot 3^{n}-1} + D_{2 \cdot 3^{n}-1}, 
2 \cdot 3^{n} \cdot D_{2 \cdot 3^{n}-1} \right).
\end{equation}
That is, 
\begin{equation}
2 \cdot 3^{n} \cdot D_{2 \cdot 3^{n}-1} = w \cdot D_{2 \cdot 3^{n}}.
\label{reduce-1}
\end{equation}

\begin{Lem}
\label{prop-2and3}
The number $w$ has the form $2^{\alpha} \cdot 3^{\beta}$, for some 
$\alpha, \beta \geq 0$. 
\end{Lem}

\begin{proof}
Any prime factor $p$ of $w$ divides 
$$ 2 \cdot 3^{n} \cdot 
\left(2 \cdot 3^{n} \cdot N_{2 \cdot 3^{n}-1} + D_{2 \cdot 3^{n}-1} \right) - 
2 \cdot 3^{n} \cdot D_{2 \cdot 3^{n}-1} = 
2^{2} \cdot 3^{2n} \cdot 
N_{2 \cdot 3^{n}-1}.$$
Then $p$ is a common divisor of 
$2 \cdot 3^{n} \cdot N_{2 \cdot 3^{n}-1}$ and
$2 \cdot 3^{n} \cdot D_{2 \cdot 3^{n}-1}$.  The harmonic 
numbers are in reduced form, so $p$ must be $2$ or $3$.
\end{proof}

The relation~\eqref{reduce-1} becomes
$2 \cdot 3^{n} D_{2 \cdot 3^{n}-1} = 2^{\alpha} \cdot 3^{\beta} 
D_{2 \cdot 3^{n}}$, and 
replacing this in~\eqref{quot-1} yields
\begin{equation}
2^{\alpha} \cdot 3^{\beta} N_{2 \cdot 3^{n}} = 
2 \cdot 3^{n} N_{2 \cdot 3^{n}-1} + D_{2 \cdot 3^{n}-1}. 
\label{quot-2}
\end{equation}
Define $t = \lfloor{ \log_2 (2 \cdot 3^{n}-1) \rfloor} > 1$ and write 
$D_{2 \cdot 3^{n}-1} = 2^{t} C_{2 \cdot 3^{n}-1}$ with $C_{2 \cdot 3^{n}-1}$
an odd integer. Then~\eqref{quot-2} becomes 
\begin{equation}
2^{\alpha-1} \cdot 3^{\beta} N_{2 \cdot 3^{n}} - 2^{t-1} C_{2 \cdot 3^{n}-1} 
= 3^{n} N_{2 \cdot 3^{n}-1}. 
\label{quot-3}
\end{equation}
A simple analysis of the parity of each term in~\eqref{quot-3} shows that 
the only possibility is $\alpha=1$. 

The relation~\eqref{reduce-1} now becomes 
\begin{equation}
3^{n} \cdot D_{2 \cdot 3^{n}-1} = 3^{\beta} \cdot D_{2 \cdot 3^{n}}.
\label{quot-4}
\end{equation}
In the computation of the denominator $D_{2 \cdot 3^{n}}$ we have the sum
\begin{equation}
1 + \frac{1}{2} + \frac{1}{3} + \cdots + \frac{1}{3^{n}} + \cdots + 
\frac{1}{2 \cdot 3^{n}-1} + \frac{1}{2 \cdot 3^{n}}
\label{sum-1}
\end{equation}
so that the maximum power of $3$ that appears in a denominator forming the 
sum~\eqref{sum-1} is $3^{n}$. 
Simply observe that $3^{n+1} > 2 \cdot 3^{n}-1$. The combination of all the 
fractions in the sum~\eqref{sum-1} with denominator $3^{n}$ is 
\begin{equation}
\frac{1}{3^{n}} + \frac{1}{2 \cdot 3^{n}} = \frac{2+1}{2 \cdot 3^{n}} = 
\frac{1}{2 \cdot 3^{n-1}}.
\end{equation}
It follows that the maximum power of $3$ in~\eqref{sum-1} is at most $3^{n-1}$.

The terms in~\eqref{sum-1} that contain exactly $3^{n-1}$ in 
the denominator are 
\begin{equation}
	\frac{1}{3^{n-1}}, \, \frac{1}{2 \cdot 3^{n-1}}, \, \frac{1}{4 \cdot 3^{n-1}}, \, \frac{1}{5 \cdot 3^{n-1}},
\end{equation}
and these combine with the two terms with denominator exactly divisible by $3^{n}$ 
to produce
\begin{equation}
\left( 1 + \frac{1}{2} + \frac{1}{4} + \frac{1}{5} \right) \cdot 
\frac{1}{3^{n-1}} + \frac{1}{2 \cdot 3^{n-1}} = 
\frac{49}{20 \cdot 3^{n-1}}.
\end{equation}
The rest of the terms in~\eqref{sum-1} have at most a power of $3^{n-2}$ in 
the denominator. The total sum can be written as 
\begin{equation}
\frac{49}{20 \cdot 3^{n-1}} + 
\frac{x_{n}}{y_{n} \cdot 3^{n-2}} = 
\frac{49 y_{n} + 60 x_{n}}{20 y_{n} \cdot 3^{n-1}}
\end{equation}
and no cancellation occurs. Therefore
$3^{n-1}$ is the $3$-adic valuation of $D_{2 \cdot 3^{n}}$. Write 
$D_{2 \cdot 3^{n}} = 3^{n-1} \cdot E_{2 \cdot 3^{n}}$, where 
$E_{2 \cdot 3^{n}}$ is not divisible by $3$. 

Now consider the denominator $D_{2 \cdot 3^{n}-1}$.  Observe that 
\begin{equation}
1 + \frac{1}{2} + \cdots + \frac{1}{2 \cdot 3^{n}-1} = 
\frac{1}{3^{n}} + \frac{x_{n}}{y_{n} \cdot 3^{n-1}}
= \frac{y_{n} + 3 x_{n}}{y_{n} \cdot 3^{n}},
\end{equation}
with $y_{n}$ not divisible by $3$. Therefore 
$3^{n}$ is the $3$-adic 
valuation of $D_{2 \cdot 3^{n}-1}$. Write 
$D_{2 \cdot 3^{n}-1} = 3^{n} \cdot E_{2 \cdot 3^{n}-1}$ where 
$E_{2 \cdot 3^{n}-1}$ is not divisible by $3$. 

The relation~\eqref{quot-4} now reads
$3^{2n} E_{2 \cdot 3^{n}-1} = 3^{\beta +n-1} E_{2 \cdot 3^{n}}$
and this gives $\beta = n+1$. Replacing in~\eqref{quot-4} produces
$D_{2 \cdot 3^{n}-1} = 3 D_{2 \cdot 3^{n}}$, as claimed. 
\end{proof}

\section{The iterated integral of $\ln(1+x^{3})$} \label{sec-case3}
\setcounter{equation}{0}

In this final section we consider the iterated integral of $\ln(1+x^{3})$.  The first value is
\begin{multline*}
	f_{1}(x) =
	\frac{1}{6} \left(\sqrt{3} \pi - 18 x\right) \\
	- \sqrt{3} \arctan\left(\frac{1 - 2 x}{\sqrt{3}}\right)
	+ (x + 1) \ln(x + 1)
	+ \frac{1}{2} (2 x-1) \ln(x^2 - x + 1).
\end{multline*}
This and additional values suggest the ansatz
\begin{equation}
	f_{n}(x) = A_{n,3}(x) + B_{n,3}(x) u + C_{n,3}(x) v + D_{n,3}(x) w,
\label{form3-ans}
\end{equation}
where
\begin{align*}
	u &= \sqrt{3} \arctan \left( \frac{1 - 2x}{\sqrt{3}} \right) \\
	v &= \ln(x + 1) \\
	w &= \ln(x^2-x+1)
\end{align*}
and where $A_{n,3}$ is a polynomial in $\mathbb{Q}[\sqrt{3} \pi, x]$ and $B_{n,3}$, $C_{n,3}$, and $D_{n,3}$ are polynomials in $\mathbb{Q}[x]$.

The method of roots described in Section~\ref{sec-roots} shows that $f_n(x)$ can be expressed in terms of $\ln(x + 1)$, $\ln(x + \omega)$, and $\ln(x + \bar{\omega})$,
where $\omega = e^{2 \pi i/3} = \tfrac{1}{2} (-1 + i \sqrt{3})$ satisfies $\omega^{3} = 1$.
Using the relation
\begin{equation}
	\ln(-2 i (x + \omega)) = \frac{1}{2} \ln(x^2 - x + 1) + i \arctan\left(\frac{1 - 2 x}{\sqrt{3}}\right) + \ln(2),
\end{equation}
we can convert between \eqref{form3-ans} and expressions in terms of $\ln(x + \omega)$ and $\ln(x + \bar{\omega})$.

As was the case for the iterated integrals of $\ln(1+x)$ and $\ln(1+x^2)$, it is easy to conjecture closed forms for all but one of these polynomials.

\begin{Thm}
Define 
\[
\chi_{3}(k) =
\begin{cases}
	0	& \text{if $k \equiv 0 \mod 3$} \\
	1	& \text{if $k \equiv 1 \mod 3$} \\
	-1	& \text{if $k \equiv 2 \mod 3$}
\end{cases}
\]
and 
\[
\lambda(k) =
\begin{cases}
	1	& \text{if $k \equiv 0 \mod 3$} \\
	0	& \text{if $k \nequiv 0 \mod 3$.}
\end{cases}
\]
Then 
\begin{align*}
	B_{n,3}(x) & = - \frac{1}{n!} \sum_{k=0}^n \chi_{3}(n-k) \binom{n}{k}x^{k} \\
	C_{n,3}(x) & = \frac{1}{n!}(x+1)^{n} = \frac{1}{n!} \sum_{k=0}^{n} \binom{n}{k}x^{k} \\
	D_{n,3}(x) & = \frac{1}{2n!} \sum_{k=0}^{n} (3 \lambda(n-k) - 1) \binom{n}{k}x^{k}.
\end{align*}
\end{Thm}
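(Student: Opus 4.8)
The plan is to use the factorization $1+x^{3}=(x+1)(x+\omega)(x+\bar\omega)$ established above, together with the additivity $\ln(1+x^{3})=\ln(x+1)+\ln(x+\omega)+\ln(x+\bar\omega)$ and the linearity of the iterated-integral operation. Consequently the $n$-th iterated integral of $\ln(1+x^{3})$ is the sum of the three iterated integrals of $\ln(x+a)$ for $a\in\{1,\omega,\bar\omega\}$, each given in closed form by Theorem~\ref{thm-loga}. In that formula the only piece carrying a logarithm $\ln(x+a)$ is $(x+a)^{n}\ln(x+a)/n!$; the polynomial sum and the term $-\bigl((x+a)^{n}-x^{n}\bigr)\ln a/n!$ (with $\ln\omega=2\pi i/3$ and $\ln\bar\omega=-2\pi i/3$) are polynomials in $x$ times constants, and these, being free of $u,v,w$, contribute only to $A_{n,3}$.

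First I would feed the conversion relation from the excerpt into these logarithmic terms. Since $\arctan\bigl((1-2x)/\sqrt{3}\bigr)=u/\sqrt{3}$ and $\ln(-2i)=\ln 2-i\pi/2$, it gives $\ln(x+\omega)=\tfrac12 w+\tfrac{i}{\sqrt{3}}u+\tfrac{i\pi}{2}$ and, by conjugation, $\ln(x+\bar\omega)=\tfrac12 w-\tfrac{i}{\sqrt{3}}u-\tfrac{i\pi}{2}$, while $\ln(x+1)=v$. Matching the ansatz~\eqref{form3-ans} --- justified by the same linear independence of $u$, $v$, $w$ (and the constants) used in the $\ln(1+x^{2})$ case --- and collecting the coefficients of $v$, $u$, $w$ yields the root forms
\begin{gather*}
C_{n,3}(x)=\frac{(x+1)^{n}}{n!},\qquad
B_{n,3}(x)=\frac{i}{\sqrt{3}\,n!}\bigl[(x+\omega)^{n}-(x+\bar\omega)^{n}\bigr], \\
D_{n,3}(x)=\frac{1}{2\,n!}\bigl[(x+\omega)^{n}+(x+\bar\omega)^{n}\bigr].
\end{gather*}
The first is already the asserted closed form for $C_{n,3}$, and the imaginary constants $\pm i\pi/2$ (together with the $\ln\omega,\ln\bar\omega$ terms) are confirmed to land in $A_{n,3}$.

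To finish, I would expand the remaining two forms by the binomial theorem, writing $(x+\omega)^{n}\pm(x+\bar\omega)^{n}=\sum_{k=0}^{n}\binom{n}{k}x^{k}\bigl(\omega^{n-k}\pm\bar\omega^{n-k}\bigr)$, and then evaluate the root-of-unity sums. A check on the residues $m\equiv 0,1,2\pmod 3$ gives $\omega^{m}-\bar\omega^{m}=2i\sin(2\pi m/3)=i\sqrt{3}\,\chi_{3}(m)$ and $\omega^{m}+\bar\omega^{m}=2\cos(2\pi m/3)=3\lambda(m)-1$. Substituting these into the root forms makes the factors of $i$ and $\sqrt{3}$ cancel, leaving $B_{n,3}(x)=-\tfrac{1}{n!}\sum_{k=0}^{n}\chi_{3}(n-k)\binom{n}{k}x^{k}$ and $D_{n,3}(x)=\tfrac{1}{2n!}\sum_{k=0}^{n}\bigl(3\lambda(n-k)-1\bigr)\binom{n}{k}x^{k}$, exactly as claimed.

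I expect the only points requiring care to be the coefficient-extraction step --- one must be sure that $u,v,w$ together with the constants are linearly independent as functions, so that the ansatz~\eqref{form3-ans} pins down $B_{n,3},C_{n,3},D_{n,3}$ uniquely --- and the accompanying bookkeeping that every imaginary constant is swept into $A_{n,3}$ rather than into the three rational polynomials. The evaluation of the roots-of-unity sums and the cancellation of the factors of $i$ are then routine.
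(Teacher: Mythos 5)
Your proof is correct, and its endgame---expanding $(x+\omega)^{n}\pm(x+\bar\omega)^{n}$ by the binomial theorem and evaluating $\omega^{m}-\bar\omega^{m}=i\sqrt{3}\,\chi_{3}(m)$ and $\omega^{m}+\bar\omega^{m}=3\lambda(m)-1$ by residues mod $3$---is exactly the computation the paper performs. The one place you diverge is in how the root forms
\[
B_{n,3}=\frac{i}{\sqrt{3}\,n!}\bigl[(x+\omega)^{n}-(x+\bar\omega)^{n}\bigr],\qquad
C_{n,3}=\frac{(x+1)^{n}}{n!},\qquad
D_{n,3}=\frac{1}{2n!}\bigl[(x+\omega)^{n}+(x+\bar\omega)^{n}\bigr]
\]
are obtained: the paper asserts that the three polynomials must be linear combinations of $(x+1)^{n}$, $(x+\omega)^{n}$, $(x+\bar\omega)^{n}$ and then pins down the coefficients by ``comparing initial values'' (or, alternatively, by solving the fourth-order holonomic recurrence~\eqref{rec-3}), whereas you derive them deductively by summing Theorem~\ref{thm-loga} over the three roots and pushing the logarithmic terms through the conversion identity $\ln(x+\omega)=\tfrac12 w+\tfrac{i}{\sqrt{3}}u+\tfrac{i\pi}{2}$ (and its conjugate), with the constants $\pm i\pi/2$ and the $\ln a$ terms correctly swept into $A_{n,3}\in\mathbb{Q}[\sqrt{3}\pi,x]$. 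Your route is more self-contained, since it replaces an empirical matching step by a direct computation; the only points deserving explicit care are the branch bookkeeping in splitting $\ln(-2i(x+\omega))$ (unproblematic here, as $x+\omega$ has positive imaginary part for real $x$) and the linear independence of $1,u,v,w$ over the polynomials, which the paper also takes for granted in the analogous $\ln(1+x^{2})$ case.
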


\begin{proof}
The method of roots developed in Section~\ref{sec-roots} shows that the iterated
integral can be expressed in the form~\eqref{form3-ans}. The polynomials 
$A_{n,3}, B_{n,3}, C_{n,3}, D_{n,3}$ will be linear combinations of the powers $(x+1)^{n}$, $(x+\omega)^{n}$, and 
$(x+\bar{\omega})^{n}$.
Comparing initial values, it is found that
%First observe that the $n$th antiderivative of the logarithm 
%of an arbitrary linear polynomial
%can be expressed as
%\[
% \frac{d^{-n}}{dx^{-n}}\ln(x-\alpha) = p_n(x) + \frac{1}{n!} (x-\alpha)^n\ln(x-\alpha),
%\]
%for some polynomial~$p_n(x)$.
%Factoring the polynomial $1+x^3$ completely into $(x+1)(x+z_1)(x+z_2)$,
%where $z_1=\frac{1}{2}(-1+i\sqrt{3})$ and $z_2=\frac{1}{2}(-1-i\sqrt{3})$
%are the third complex roots of unity, reveals that the $n$th antiderivative
%%of $\ln(1+x^3)$ can be expressed by
%\[
% \frac{1}{n!}\Big((x+1)^n\ln(x+1) + (x+z_1)^n\ln(x+z_1) + (x+z_2)^n\ln(x+z_2)\Big).
%\]
%In order to avoid logarithms with complex arguments, the following
%relations are employed:
%\begin{eqnarray*}
% \ln(x+z_1) + \ln(x+z_2) & = & \ln(x^2-x+1)\\
% \ln(x+z_1) - \ln(x+z_2) & = & 2i\left(\arctan\left(\frac{1-2x}{\sqrt3}\right)+\frac{\pi}{2}\right).
%\end{eqnarray*}
%Clearly, if the $n$th antiderivative of $\ln(1+x^3)$ is expressed using
%$u=\arctan((1-2x)/\sqrt3)$, $v=\ln(1+x)$, and $w=\ln(x^2-x+1)$,
%\[
% A_{n,3}(x) + B_{n,3}(x)u + C_{n,3}(x)v + D_{n,3}(x)w
%\]
%the polynomials $B_{n,3}$, $C_{n,3}$, and $D_{n,3}$ will be linear combinations
%of the above mentioned powers. In particular (e.g., by comparing initial
%values), it is found that
\begin{align*}
	B_{n,3}(x) & = \frac{i}{\sqrt{3} n!}\big((x+\omega)^n - (x+\bar{\omega})^n\big) \\
	C_{n,3}(x) & = \frac{1}{n!}(x+1)^n \\
	D_{n,3}(x) & = \frac{1}{2 n!}\big((x+\omega)^n + (x+\bar{\omega})^n\big).
\end{align*}
Note that the above expressions can also be automatically found as
solutions of the fourth-order recurrence that \texttt{HolonomicFunctions}
derives in this case:
\begin{multline}
\label{rec-3}
(n-2)(n-1)n^{2} F_{n} \\
	= (n-2)(n-1)(4n-3)x F_{n-1} - 3(n-2)(2n-3)x^{2} F_{n-2} \\
	+ \left[ (4n-9)x^{3} + n \right]F_{n-3} - x(x^{3}+1)F_{n-4}.
\end{multline}

The above closed forms for $B_{n,3}$ and $D_{n,3}$ can be used to derive
explicit expressions for their coefficients:
\[
	B_{n,3}(x) = \frac{i}{\sqrt{3} n!} \sum_{k=0}^n\binom{n}{k}x^k\left(\omega^{n-k}-\bar{\omega}^{n-k}\right).
\]
The value of the last parenthesis can be found by case distinction
using the fact that $\omega^3=\bar{\omega}^3=1$:
\begin{eqnarray*}
	n-k\equiv 0\mod 3: & 1-1=0,\\
	n-k\equiv 1\mod 3: & \omega-\bar{\omega} = i\sqrt3,\\
	n-k\equiv 2\mod 3: & \omega^2-\bar{\omega}^2 = -i\sqrt3.
\end{eqnarray*}
It follows that 
\[ 
	B_{n,3}(x) = -\frac{1}{n!}\sum_{k=0}^{n}\chi_3(n-k)\binom{n}{k}x^k.
\]
The similar computation for $D_{n,3}(x)$ is left to the reader.
\end{proof}

\begin{Note}
In order to obtain these functions from a purely 
symbolic approach, consider a brute force evaluation of $f_n(x)$ by using \emph{Mathematica} to evaluate~\eqref{expan-1}. The results are 
expressed in terms of the functions
\begin{equation}
	h_{1}(x) = {_{2}F_{1}}\Big(\begin{array}{c} 1/3, \, 1 \\ 4/3 \end{array} ; - x^3\Big)
 \quad \text{and} \quad
	h_{2}(x) = {_{2}F_{1}}\Big(\begin{array}{c} 2/3, \, 1 \\ 5/3 \end{array} ; - x^3\Big),
\label{hyper-cube}
\end{equation}
where 
\begin{equation}
	{_{2}F_{1}}\Big(\begin{array}{c} a, \, b \\ c \end{array} ; z\Big) = \sum_{k=0}^{\infty} \frac{(a)_{k} \, (b)_{k}}{(c)_{k} \, k!} z^{k}
\end{equation}
is the classical hypergeometric series. The first few values are
\begin{align*}
	f_{1}(x) & = -3x + x \, \ln(1+x^{3}) + 3x h_{1}(x) \\
	f_{2}(x) & = - \frac{9x^{2}}{4} + \frac{1}{2} x^{2} \ln(1 + x^{3}) + 3x^{2} h_{1}(x) - \frac{3x^{2}}{4} h_{2}(x) \\
	f_{3}(x) & = - \frac{11x^{3}}{12} + \frac{1}{6} (x^{3} + 1) \ln(1 + x^{3}) + \frac{3x^{3}}{2} h_{1}(x) - \frac{3x^{3}}{4} h_{2}(x).
\end{align*}
The hypergeometric series $h_1(x)$ and $h_2(x)$ can be expressed as 
\begin{align*}
	h_{1}(x) & = \frac{1}{3 x} \left(\ln(1 + x) + \omega \ln(1 + \bar{\omega} x) + \bar{\omega} \ln(1 + \omega x)\right) \\
	h_{2}(x) & = \frac{2}{3 x^2} \left(-\ln(1 + x) - \omega \ln(1 + \omega x) - \bar{\omega} \ln(1 + \bar{\omega} x)\right),
\end{align*}
with $\omega = e^{2 \pi i/3}$ as before.
These expressions can be transformed into the functions in~\eqref{form3-ans}.
\end{Note}

As in previous sections, the closed-form expression for the {\em pure} polynomial part $A_{n,3}(x)$ is more elaborate.
By writing
\[
	f_n(x) = \tilde{A}_{n,3}(x) + \tilde{B}_{n,3}(x) \ln(x+1) + \tilde{C}_{n,3}(x) \ln(x + \omega) + \tilde{D}_{n,3}(x) \ln(x + \bar{\omega}),
\]
we obtain a polynomial $\tilde{A}_{n,3}(x)$ with rational coefficients.
The first values are given by
\begin{equation}
	\tilde{A}_{0,3}(x) = 0, \quad \tilde{A}_{1,3}(x) = -3x, \quad \tilde{A}_{2,3}(x) = - \tfrac{9}{4}x^{2}, \quad \tilde{A}_{3,3}(x) = - \tfrac{11}{12}x^{3}.
\end{equation}

Schneider's \emph{Mathematica} package \texttt{Sigma} can be used again to obtain, from the recurrence~\eqref{rec-3} and the initial conditions, the expression
\begin{equation}
	\tilde{A}_{n,3}(x) = -\frac{1}{n!} \sum_{k=1}^{n} \frac{x^{k}}{k} \left[ (x+1)^{n-k} + (x + \omega)^{n-k} + (x + \bar{\omega})^{n-k} \right].
\end{equation}

\subsection{Arithmetical properties of $\tilde{A}_{n,3}$}

Define $\alpha_{n,3}$ to be the denominator of $\tilde{A}_{n,3}$ and 
$\beta_{n,3} = 
\alpha_{n,3}/(n \alpha_{n-1,3})$.

\begin{Conj}
The sequence $\beta_{n,3}$ is given by 
\begin{equation}
\label{betan3}
\beta_{n,3} =
\begin{cases}
	p		& \text{if $n = p^{m} \neq 3$ for some prime $p$ and $m \in \mathbb{N}$} \\
	\frac{1}{11}	& \text{if $n = 3 \cdot 11^{m}$ for some $m \in \mathbb{N}$} \\
	11		& \text{if $n = 3 \cdot 11^{m} + 1$ for some $m \in \mathbb{N}$} \\
	1		& \text{otherwise.}
\end{cases}
\end{equation}
\end{Conj}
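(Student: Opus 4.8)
The plan is to follow the template established for $\ln(1+x^{2})$ in Section~\ref{sec-arith}. First I would translate the multiplicative statement about $\beta_{n,3}$ into an explicit formula for the denominator $\alpha_{n,3}$ itself, the analogue of Conjecture~\ref{conj-2}. Guided by the fact that $\tilde A_{1,3}(x)=-3x$, $\tilde A_{2,3}(x)=-\tfrac94 x^{2}$, $\tilde A_{3,3}(x)=-\tfrac{11}{12}x^{3}$ have denominators $1,4,12$, I expect
\[
\alpha_{n,3}=
\begin{cases}
n!\,\lcm(1,\dots,n) & \text{if } n\le 2,\\
n!\,\lcm(1,\dots,n)/33 & \text{if } n=3\cdot 11^{m},\ m\ge 1,\\
n!\,\lcm(1,\dots,n)/3 & \text{otherwise.}
\end{cases}
\]
That this implies the stated shape of $\beta_{n,3}=\alpha_{n,3}/(n\,\alpha_{n-1,3})$ is then a prime-by-prime bookkeeping argument identical in spirit to the proof that Conjecture~\ref{conj-2} implies Conjecture~\ref{conj-1}: the permanent factor $1/3$ cancels in the ratio and leaves the von Mangoldt value $e^{\Lambda(n)}$ at every $n\ge 3$, while the isolated extra factor $1/11$ present only at $n=3\cdot 11^{m}$ produces the drop $\beta=1/11$ there and the compensating gain $\beta=11$ at the next integer $n=3\cdot 11^{m}+1$.

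Next I would reduce the $\alpha_{n,3}$ formula to a purely $p$-adic statement, mirroring the passage from $A_{n,2}$ to $G_{n}$ and $\gamma_{n}$. Expanding the closed form $\tilde A_{n,3}(x)=-\frac{1}{n!}\sum_{k=1}^{n}\frac{x^{k}}{k}\bigl[(x+1)^{n-k}+(x+\omega)^{n-k}+(x+\bar{\omega})^{n-k}\bigr]$ and using $1+\omega^{j}+\bar{\omega}^{j}=3$ when $3\mid j$ and $0$ otherwise collapses the three summands to a single factor of $3$, giving
\[
\tilde A_{n,3}(x)=-\frac{3}{n!}\sum_{\ell=0}^{\lfloor n/3\rfloor} c_{n,\ell}\,x^{\,n-3\ell},
\qquad
c_{n,\ell}=\sum_{k=1}^{n-3\ell}\frac{1}{k}\binom{n-k}{3\ell}.
\]
The explicit numerator $3$ is exactly what produces the uniform deficit $1/3$, and hence the special, permanent role of the prime $p=N=3$. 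Since $c_{n,0}=H_{n}$ is simply the harmonic number, the whole question becomes one about the $p$-adic denominators of the $c_{n,\ell}$: one must prove, for $n\ge 3$ and each prime $p$, that $\max_{\ell}\nu_{p}(\text{denom }c_{n,\ell})$ equals $\nu_{p}(\lcm(1,\dots,n))$ in all cases except $p=11$ at $n=3\cdot 11^{m}$, where it drops by one.

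I would then split by prime exactly as in Theorem~\ref{3-adic}. For $p=11$ the argument should be the precise analogue of that proof: the denominators appearing in $c_{n,\ell}$ are products of consecutive integers bounded by $n$, so at most a couple of terms carry the top power $11^{r}$ with $r=\lfloor\log_{11}n\rfloor$; writing $S_{1}(n,\ell)$ for their sum and applying Kummer's theorem to the binomial coefficients $\binom{n-11^{r}}{3\ell}$ and $\binom{n-11^{r}+1}{3\ell}$ for a well-chosen $\ell$ should show that the top power survives for every $n$ except when $n=3\cdot 11^{m}$, where a single explicit cancellation — the counterpart of the evaluations $S_{1}=\frac{3^{m}+3}{3^{2m}-1}$ and $H_{6}=\frac{49}{20}$ in the quadratic proof — lowers the valuation by one. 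For $p=3$ one proves the uniform statement $\max_{\ell}\nu_{3}(\text{denom }c_{n,\ell})=\nu_{3}(\lcm(1,\dots,n))$ for all $n\ge 3$ by the same consecutive-denominator plus Kummer analysis, now with no exceptional $n$.

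The main obstacle is the remaining generic case: establishing $\max_{\ell}\nu_{p}(\text{denom }c_{n,\ell})=\nu_{p}(\lcm(1,\dots,n))$ for every prime $p\ne 3,11$ and every $n$. This is exactly the type of statement left open for $\ln(1+x^{2})$, where the proof of Conjecture~\ref{conj-2} was reduced to $\nu_{p}(\gamma_{n})=\nu_{p}(\lcm(1,\dots,n))$ for all $p>3$; the difficulty here is the same, and is compounded by the need to explain why, among all primes, only $11$ produces an exceptional cancellation for $N=3$. Locating the precise arithmetic coincidence at $n=3\cdot 11^{m}$ that forces the top power of $11$ to cancel in every $c_{n,\ell}$ — the analogue of the evaluation $H_{6}=\tfrac{49}{20}$ that isolated the prime $3$ in the quadratic case — is where I expect the genuine difficulty to lie.
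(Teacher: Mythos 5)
The statement you are asked to prove is labeled a \emph{Conjecture} in the paper, and the paper in fact offers no proof of it: the formula~\eqref{betan3} is presented as the outcome of symbolic computation, and the only thing actually proved in its vicinity is the small lemma that $3\cdot 11^{m}+1$ is never a prime power (which explains why the ``$3p$'' case of~\eqref{betan2} has no analogue here). So there is no argument of the paper's to compare yours against, and your submission should be judged as a proof on its own terms --- which it is not. Your concrete reductions do check out: the proposed closed form for $\alpha_{n,3}$ is consistent with the listed values $1,4,12$ and does imply the stated $\beta_{n,3}$ by the same prime-by-prime bookkeeping used to show Conjecture~\ref{conj-2} implies Conjecture~\ref{conj-1} (note only that at $n=3$ the onset of the permanent factor $1/3$ cancels $e^{\Lambda(3)}=3$ to give $\beta_{3,3}=1$, which is why the conjecture excludes $n=3$ from the prime-power case); and the identity $1+\omega^{j}+\bar{\omega}^{j}=3\lambda(j)$ correctly collapses $\tilde{A}_{n,3}$ to $-\tfrac{3}{n!}\sum_{\ell}c_{n,\ell}x^{n-3\ell}$ with $c_{n,0}=H_{n}$, the analogue of the passage from $A_{n,2}$ to $G_{n}$.

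The gap is that every genuinely hard step is asserted rather than carried out. The claim that Kummer's theorem applied to $\binom{n-11^{r}}{3\ell}$ and $\binom{n-11^{r}+1}{3\ell}$ ``should show'' the top power of $11$ survives except at $n=3\cdot 11^{m}$ is precisely the content that would need to be proved, and unlike the $p=3$ analysis in Theorem~\ref{3-adic} there is no identified mechanism here --- no analogue of the explicit cancellations $S_{1}=\tfrac{3^{m}+3}{3^{2m}-1}$ or $H_{6}=\tfrac{49}{20}$ --- singling out $11$ among all primes; you acknowledge this yourself. The generic-prime case $\max_{\ell}\nu_{p}(\operatorname{denom}c_{n,\ell})=\nu_{p}(\lcm(1,\dots,n))$ is likewise open, and indeed the corresponding statement for $N=2$ is left unresolved in the paper for all $p>3$. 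What you have written is a reasonable research program that mirrors the partial results of Section~\ref{sec-arith}, and it would be a fine framing for an attack on the conjecture, but it establishes nothing beyond what the paper already conjectures.
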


Observe that this expression for $\beta_{n,3}$ does not have the 
exceptional case where $3 \cdot 11^{m} + 1$ is a prime power that 
appears in $\beta_{n,2}$ given in~\eqref{betan2}. This is ruled out by the following.

\begin{Lem}
Let $m \in \mathbb{N}$. Then $3 \cdot 11^{m}+1$ is not a prime power.
\end{Lem}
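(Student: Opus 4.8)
The plan is to reduce the statement to a question about powers of $2$ and then settle it with a single congruence. First I would observe that $3 \cdot 11^{m}$ is odd, so $3 \cdot 11^{m}+1$ is even; consequently, if it were a prime power $p^{k}$ with $k \geq 1$, the prime $p$ would have to equal $2$. Thus the lemma is equivalent to showing that $3 \cdot 11^{m}+1 = 2^{k}$ has no solution with $m \geq 1$ and $k \geq 1$.

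Next I would analyze this equation modulo $8$. Since $11 \equiv 3 \pmod{8}$ and $3^{2} \equiv 1 \pmod{8}$, one has $11^{m} \equiv 3 \pmod 8$ for $m$ odd and $11^{m} \equiv 1 \pmod 8$ for $m$ even. Multiplying by $3$ and adding $1$ then gives
\[
3 \cdot 11^{m}+1 \equiv
\begin{cases}
2 \pmod 8 & \text{if $m$ is odd,}\\
4 \pmod 8 & \text{if $m$ is even.}
\end{cases}
\]
On the other hand, a power of two is congruent to $2 \pmod 8$ only when it equals $2^{1}=2$, and congruent to $4 \pmod 8$ only when it equals $2^{2}=4$, since $2^{k} \equiv 0 \pmod 8$ for every $k \geq 3$.

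The argument then concludes by a size comparison, and here the hypothesis $m \geq 1$ is essential: for $m \geq 1$ we have $3 \cdot 11^{m}+1 \geq 34$, which exceeds both $2$ and $4$, contradicting the congruence constraint. (One must keep $m \geq 1$, because $m=0$ would yield the genuine prime power $3 \cdot 11^{0}+1 = 4 = 2^{2}$.) Combined with the first step, this shows $3 \cdot 11^{m}+1$ is not a power of $2$, hence not a prime power.

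I do not expect a genuine obstacle; the only real insight is that evenness immediately collapses the problem to the single prime $2$, after which the residue $3 \cdot 11^{m}+1 \bmod 8 \in \{2,4\}$ eliminates all exponents $k \geq 3$ simultaneously. A slightly longer alternative would note that $2^{k} \equiv 1 \pmod 3$ forces $k$ to be even, factor $2^{2j}-1 = (2^{j}-1)(2^{j}+1)$, and use the coprimality of the two factors together with the factorization $3 \cdot 11^{m}$ to force $j \leq 2$; the congruence route above is cleaner and avoids this case analysis.
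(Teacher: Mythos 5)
Your proof is correct and follows essentially the same route as the paper: evenness reduces the question to the prime $2$, and the residue of $3\cdot 11^{m}+1$ modulo $8$ (which the paper computes as $3^{m+1}+1\in\{2,4\}\bmod 8$) combined with the size bound $3\cdot 11^{m}+1>4$ rules out all powers of $2$. Your explicit remark that $m=0$ would give the genuine prime power $4$ is a nice clarification of why the hypothesis $m\geq 1$ is needed.
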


\begin{proof}
The number $3 \cdot 11^{m} + 1$ is even, so only the prime $2$ needs to be checked.  We have $3 \cdot 11^{m} + 1 \equiv 3^{m+1} + 1 \nequiv 0 \mod 8$ since the powers of $3$ are $1$ or $3$ modulo $8$.  Therefore $3 \cdot 11^{m} + 1$ (since it is larger than $4$) is not a power of $2$.
\end{proof}

\noindent
{\bf Acknowledgements}.
The authors wish to thank Xinyu Sun for discussions 
on the paper, specially on Section 5.
The authors also wish to thank the referee for a detailed reading of the paper and many corrections.
The 
work of the third author was 
partially supported by NSF DMS 0070567.
The work of the second author 
was partially supported by the same grant as a postdoctoral fellow at 
Tulane University and by the Austrian Science Fund (FWF):P20162-N18.
The work of the last author was partially supported by 
Tulane VIGRE Grant 0239996.

\end{document}